\newcommand{\mathds}[1]{#1}
\newtheorem{theorem}{Theorem}
\newtheorem{corollary}[theorem]{Corollary}
\newtheorem{lemma}[theorem]{Lemma}
\newtheorem{remark}{Remark}
\newcommand{\E}{\mathbb{E}}
\renewcommand{\P}{\mathbb{P}}
\newcommand{\R}{\mathbb{R}}
\newcommand{\N}{\mathbb{N}}
\newcommand{\w }[1]{\widehat{#1}}
\newcommand{\dd}{ \mathrm{d} }
\renewcommand{\r}{ \rightarrow }
\newcommand{\vertiii}[1]{{\left\vert\kern-0.25ex\left\vert\kern-0.25ex\left\vert #1 
    \right\vert\kern-0.25ex\right\vert\kern-0.25ex\right\vert}}
\newcommand{\lr}{ \longrightarrow }
\DeclareMathOperator{\spann}{span} 
\DeclareMathOperator{\var}{var}
\newcommand{\cRM}[1]{\MakeUppercase{\romannumeral #1}}
\renewcommand{\t}[1]{\text{#1}}
\newcolumntype{C}[1]{>{\centering\arraybackslash }b{#1}}
\begin{document}

\title{On the acceleration of some empirical means with application to nonparametric regression}

\author{Bernard Delyon and Fran\c cois Portier}
\date{}
\maketitle

   {\scshape Abstract:}\ Let $(X_1,\ldots ,X_n)$ be an i.i.d. sequence of random variables in $\R^d$, $d\geq 1$, for some function $\varphi:\R^d\r \R$, under regularity conditions, we show that
\begin{align*}
 n^{1/2} \left(n^{-1} \sum_{i=1}^n \frac{\varphi(X_i)}{\w f^{(i)}(X_i)}-\int_{} \varphi(x)dx \right) \overset{\P}{\lr} 0,
\end{align*}
where $\w f^{(i)}$ is the classical leave-one-out kernel estimator of
the density of $X_1$. This result is striking because it speeds up 
traditional rates, in root $n$, derived from the central limit theorem 
when $\w f^{(i)}=f$. As a consequence, it improves the classical 
Monte Carlo procedure for integral approximation. The paper mainly addressed with theoretical issues related to the later result (rates 
of convergence, bandwidth choice, regularity of $\varphi$) but also 
interests some statistical applications dealing with random design regression. In particular, 
we provide the asymptotic normality of the estimation of the linear functionals of a regression function on which the only requirement is 
the  H\"older regularity. This leads us to a new version of the \textit{average derivative estimator} introduced by H\"ardle and Stoker in \cite{hardle1989} which allows for \textit{dimension reduction} by estimating the \textit{index space} of a regression.

\bigskip

\noindent \textbf{Key words:} Semiparametric regression, Multiple index model, Kernel smoothing, Integral approximation.

\bigskip

\section{Introduction}

Let $(X_1,\ldots ,X_n)$ be an i.i.d. sequence of random variables in $\R^d$, $d\geq 1$, for some function $\varphi:\R^d\r \R$, under regularity conditions, we show that
\begin{align}\label{the result}
 n^{1/2} \left(n^{-1} \sum_{i=1}^n \frac{\varphi(X_i)}{\w f^{(i)}(X_i)}-\int_{} \varphi(x)dx \right) \overset{\P}{\lr} 0,
\end{align}
where $\w f^{(i)}$ is the classical leave-one-out kernel estimator of the density of $X_1$ say $f$, defined by
\begin{align*}
\w f^{(i)}(x)  = (nh^d)^{-1}\sum_{j\neq i}^n K(h^{-1} (X_j-x)),\qquad \text{for every }x\in \R^d,
\end{align*}
where $K$ is a $d$-dimensional kernel and where $h$, called the bandwidth, needs to be chosen and will certainly depend on $n$.
 Result (\ref{the result}) and the central limit theorem lead to the following reasoning: when estimating the integral of a function that is evaluated on a random grid $(X_i)$, whether $f$ is known or not, using a kernel estimator of $f$ provides better convergence rates than using $f$ itself. A first obvious application of this result is for Monte Carlo integration
 when the design, i.e. the distribution of the points, is not controlled. If the design 
 is free, other methods exist, like quasi random numbers, which may prove to be 
 more efficient, depending on the regularity of the function and on the dimension 
 (we refer to \cite{caflisch1998} for a comprehensive presentation of these methods). In this paper, we are interested in the random design case for which the previous methods as Quasi Monte Carlo and grid integration cannot be implemented.

Equation (\ref{the result}) may have applications in nonparametric regression with random design. Let
\begin{align}\label{modelad}
Y_i= g(X_i) + \sigma(X_i) e_i,
\end{align}
where $(e_i)$  is an i.i.d. sequence of real random variables independent of the sequence $(X_i)$, and $\sigma:\R^d \r \R$ and $g:\R^d \r \R$ are unknown functions. In this context, one of the most evident use of Equation (\ref{the result}) deals with the estimation of the linear functionals of $g$, i.e. the quantities $\int g(x)\psi(x)dx$ for some functions $\psi:\R^d\r \R$. Under regularity conditions, we show that
\begin{align}\label{theresult2}
 \ n^{1/2} \left(n^{-1} \sum_{i=1}^n \frac{Y_i \psi(X_i)}{\w f^{(i)}(X_i)}-\int_{} g(x)\psi(x)dx \right)\overset{\dd}{\lr} \mathcal N (0,v),
\end{align}
where $v=\var((Y-g(X_1))\psi(X_1)f(X_1)^{-1})$. Among typical applications of Result (\ref{theresult2}), we can mention Fourier coefficients estimation for either nonparametric estimation (see for instance \cite{hardle1990}, section 3.3) or location parameter estimation (see \cite{gamboa2007} and the reference therein). We shall focus on applications dealing with the \textit{multiple index model}, i.e. when the link function $g(x)=g_0(\beta^T x)$ for every $x\in \R^d$, with $\beta\in \R^{d\times p}$ called the index, $p\leq d$. As it was noticed by H\"ardle and Stoker in \cite{hardle1989}, for the \textit{average derivative estimator} (ADE), when $\psi=\nabla f$ the estimator in Equation (\ref{theresult2}) recovers the \textit{index} with rates root $n$. Their method is popular, notably because it is a direct estimation procedure that does not involve complicated optimization algorithm. Thanks to Result \ref{theresult2}, we shall see that choosing different functions $\psi$ than $\nabla f$ may lead to an accurate estimation of the \textit{index space} $\spann(\beta)$.

 The estimation of the linear functionals of $g$ is a typical semiparametric problem in the sense that it requires the nonparametric estimation of $f$ as a first step and then to use it in order to estimate a real parameter. To the best of our knowledge, estimators that achieve root $n$ consistency have not been provided yet in the case of a regression with random design. Our approach is based on kernel estimates $\w f^{(i)}$ of the density of $X_1$ that is then plugged into the classical empirical estimator of the quantity $\E[Y\psi(X) f(X)^{-1}]$. There is at least four main interesting facts about the weak convergence (\ref{theresult2}). They are listed below.

\begin{enumerate}[(A)]
\item \label{p1} The first point about Equation (\ref{theresult2}) is that, despite slower rates than root $n$ obtained when estimating $f$, the final estimator recovers the parametric rate root $n$. Similar facts have already been noticed by some authors in different semiparametric problems as, among others, by Stone in \cite{stone1975} in the case of the estimation of a location parameter, by Robinson in \cite{robinson1988} in a \textit{partially linear regression model}, or by H\"ardle and Stoker in \cite{hardle1989} studying ADE (see also \cite{ichimura1993} and \cite{patilea2003} about the semiparametric $M$-estimation). 

\item \label{p2}Going further in the analysis of Result (\ref{theresult2}), we notice that the asymptotic variance $v$ is smaller than the asymptotic variance of the estimator with the true density (see Equation (\ref{varcomp}) in Remark \ref{rem5bis}). As a consequence for this problem, there is an asymptotic gain in estimating the density. We might remark that the underlying cause is Result (\ref{the result}) because it implies that the asymptotic variance $v$ stems only from the noise $e_i$ associated to the observation of $Y_i$ in Model (\ref{modelad}). Surprisingly there is not any terms in $v$ that are due to the randomness of the design. 

\item \label{p3} Despite similarities between our estimator and some estimators of the semiparametric literature (e.g. the references in Point (\ref{p1})), the technical details of our approach are different since they are based on Equation (\ref{the result}). A similar result was originally stated by Vial in \cite{vial2003} (Chapter 7, Equation (7.27)) in the \textit{multiple index model} context. 

\item \label{p4} Unfortunately, it turns out that Result (\ref{the result}) is no longer true when estimating functionals of the form $f\mapsto \int  T(x,f(x)) dx$ where $T:\R^2\rightarrow \R$ is different from the map $(x,y)\mapsto \varphi(x)$ (see Section \ref{generalizing}). As a result, it suggests that Point (\ref{p2}) has no reason to hold when estimating $\int g(x) T(x,f(x)) dx $ with our approach. In view of the asymptotic variance of ADE expressed in Equation (\ref{varade}), this kind of suboptimal properties happen also for ADE where the transformation $T$ differs from the map $(x,y)\mapsto \varphi(x)$ and involves the derivative of $f$. As a consequence, it might be better to replace, in ADE, the derivatives of $f$ by the derivatives of a known function.

\end{enumerate}


The paper is organized as follows. Section \ref{s1} deals with technical issues related to Equation (\ref{the result}). In particular, we examine the rates of convergence of (\ref{the result}) according to the choice of the bandwidth, the dimension and the regularity of the functions $\varphi$ and $f$. We also introduce a corrected estimator that converges to $0$ faster than the initial one given in Equation (\ref{the result}). This corrected estimator allows a less restrictive choice of the bandwidth. Section \ref{s2} is dedicated to the estimation of the linear functionals of $g$. We show Result (\ref{theresult2}) under mild conditions on $g$ that only needs to be piecewise H\"older. In Section \ref{s3}, we focus on the application of our results in the context of the \textit{multiple index model}. We provide a new version of ADE that might be more efficient (see point (\ref{p4}). We give some simulations that compare our method with ADE and \textit{inverse regression methods} introduced by Li in \cite{li1991} that typically ask more than ADE on the distribution of $X$.

\section{Integral approximation by kernel smoothing}\label{s1}

Let $Q\subset \R^d$ be the support of $\varphi$. The quantity $I(\varphi)= \int  \varphi(x)dx $ is estimated by 
\begin{align*}
\w I(\varphi) = n^{-1} \sum_{i=1}^n \frac{\varphi(X_i)}{\w f^{(i)}(X_i)} 
\end{align*}
We define the leave-one-out estimator of the variance of $h^{-p}K(h^{-1}(x-X_j))$ by  
\begin{align*}
\w v^{(i)}(x) = ((n-1)(n-2))^{-1} \sum_{j\neq i}^n (h^{-d}K(h^{-1}(x-X_j))- \w f^{(i)}(x))^2 ,
\end{align*}
this one is needed to correct the initial estimator by
\begin{align*}
\w I_{cor}(\varphi) = n^{-1} \sum_{i=1}^n \frac{\varphi(X_i)}{\w f^{(i)}(X_i)}\left(1-\frac{\w v^{(i)}(X_i)}{\w f^{(i)}(X_i)^2}\right) .
\end{align*}
To state our main result about the convergences of $\w I(\varphi)$ and 
$\w I_{cor}(\varphi) $, we define the Nikol'ski class $\EuScript H_s$ 
of functions of regularity $s=k+\alpha$,
$k\in \mathbb N$, $0<\alpha\le 1$ as the set of $k$ times differentiable functions
$\varphi$ such that all its derivatives of order $k$ satisfy
\cite{tsybakov2009}
\begin{align}\label{nicol}
\int (\varphi^{(l)}(x+u)-\varphi^{(l)}(x))^2dx\le C|u|^{2\alpha},~~l=(l_1,\dots , l_d), ~~\sum l_i\le k.
\end{align}
Be careful that $k=\lfloor s\rfloor$, with the convention that $\lfloor n\rfloor=n-1$ if $n\in\mathbb N$. We need the following assumptions.

\begin{enumerate}[(\text{A}1)]
\item \label{ash1} For some ${s}>0$ the function $\varphi$ belongs to $\EuScript H_{s}$ on 
$\R^d$ and has compact support $Q$.
\item \label{ash2} The variable $X_1$ has a bounded density $f$ on $\R^d$ such that its $r$-th 
order derivatives are bounded.
\item \label{ash3} For every $x\in Q$, $f(x)\geq b>0$.
\item \label{ash4} The kernel $K$ is symmetric with order $r\ge{s}$.
Moreover, for every $x\in \R^d$, $K(x)\leq C_1\exp(-C_2\|x\|)$ for some constants $C_1$ and $C_2$.
\end{enumerate}

The next theorem is proved in the appendix.

\begin{theorem}\label{thelemma}
Assume that \emph{(A\ref{ash1}-A\ref{ash4})} hold, we have the following $O_\P$ estimates
\begin{align}\label{borne1}
& n^{1/2} \left(\w I_{}(\varphi) -\int \varphi(x) dx \right) 
 = O_\P\left( h^{s} + n^{1/2}h^{r} +  n^{-1/2}h^{-d} \right),\\
 &n^{1/2} \left(\w I_{cor}(\varphi)
 -\int \varphi(x)  dx \right)= O_\P\left( h^{s} + n^{1/2}h^{r} +  n^{-1/2}h^{-d/2} +  n^{-1} h^{-3d/2} \right)\label{borne2}
\end{align}
which are valid if the sums inside the $O_\P$'s tend to zero.
\end{theorem}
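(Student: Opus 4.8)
The plan is to expand the random denominator and isolate a leading linear stochastic term which, remarkably, cancels. Write $\Delta_i=\w f^{(i)}(X_i)-f(X_i)$, $\phi=\varphi/f$, and $K_h(u)=h^{-d}K(h^{-1}u)$, so that $\w f^{(i)}(X_i)=n^{-1}\sum_{j\neq i}K_h(X_j-X_i)$. On the event where $\w f^{(i)}(X_i)$ stays bounded away from $0$ uniformly in $i$, a Taylor expansion of $t\mapsto 1/t$ gives
\begin{align*}
\frac{\varphi(X_i)}{\w f^{(i)}(X_i)}=\phi(X_i)-\frac{\varphi(X_i)}{f(X_i)^2}\Delta_i+\frac{\varphi(X_i)}{f(X_i)^3}\Delta_i^2+R_i,
\end{align*}
with $R_i$ a cubic remainder in $\Delta_i$. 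Averaging yields $\w I(\varphi)=T_0+T_1+T_2+n^{-1}\sum_i R_i$, where $T_0=n^{-1}\sum_i\phi(X_i)$ has mean $I(\varphi)$ but root-$n$ fluctuations. First I would establish a uniform lower bound $\inf_i \w f^{(i)}(X_i)\ge b/2$ with probability tending to one; this is where (A\ref{ash3}) and the exponential tail in (A\ref{ash4}) enter, via an exponential concentration bound for each $\w f^{(i)}(X_i)$ together with a union bound over the $n$ indices, the complementary event being shown to contribute $o_\P$ of the stated rates.

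The conceptual core is the cancellation of the root-$n$ terms in $T_0+T_1$. Since $\Delta_i=n^{-1}\sum_{j\neq i}K_h(X_j-X_i)-f(X_i)$, one has $T_1=-U+T_0$ with $U=n^{-2}\sum_{i\neq j}\tfrac{\varphi(X_i)}{f(X_i)^2}K_h(X_j-X_i)$, so that $T_0+T_1=2T_0-U$. I would apply the Hoeffding decomposition to $U$. Using the symmetry of $K$ and its order $r\ge s$, the two H\'ajek projections are $g_1(x)=\tfrac{\varphi(x)}{f(x)^2}(K_h*f)(x)=\phi(x)+O(h^r)$ and $\tilde g_1(y)=(K_h*\phi)(y)=\phi(y)+O(h^s)$, while $\theta=\E[U]=I(\varphi)+O(h^r)$. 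The linear part of $U$ is therefore $n^{-1}\sum_i\{g_1(X_i)+\tilde g_1(X_i)\}-\theta\approx 2T_0$, so that in $2T_0-U$ the fluctuating $T_0$ contributions annihilate and only $\theta$ survives at first order. What remains are: the deterministic bias $\theta-I(\varphi)=O(h^r)$, contributing $n^{1/2}h^{r}$; the fluctuation of the smoothing error $K_h*\phi-\phi$, of $L^2(f)$-size $O(h^s)$ by the Nikol'ski regularity (A\ref{ash1}) and smoothness of $f$ (A\ref{ash2}), contributing $n^{-1/2}h^s$, i.e.\ $h^s$ after scaling; and the degenerate second-order part of $U$, of standard deviation $O(n^{-1}h^{-d/2})$.

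Next I would treat $T_2$ and the remainder. A conditional-variance computation gives $\E[\Delta_i^2\mid X_i]=n^{-1}v(X_i)+O(h^{2r})$ with $v(x)=\var(K_h(X-x))=O(h^{-d})$, so the mean of $T_2$ is a deterministic bias of order $n^{-1}h^{-d}$; multiplied by $n^{1/2}$ this is exactly the $n^{-1/2}h^{-d}$ term in (\ref{borne1}). The cubic remainder $n^{-1}\sum_iR_i$ and the off-diagonal and degenerate pieces of $T_2$ are then shown, through moment estimates $\E[K_h(X-x)^p]=O(h^{-(p-1)d})$, to be of smaller order whenever the displayed sums tend to zero, which proves (\ref{borne1}). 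For $\w I_{cor}$ I would verify that $\w v^{(i)}(X_i)$ estimates $\E[\Delta_i^2\mid X_i]=n^{-1}v(X_i)$ up to lower order, so that the factor $1-\w v^{(i)}(X_i)/\w f^{(i)}(X_i)^2$ subtracts precisely the $n^{-1}h^{-d}$ bias of $T_2$ identified above. The same expansion then applies with the $n^{-1/2}h^{-d}$ term removed, leaving the degenerate second-order $U$-statistic ($n^{-1/2}h^{-d/2}$) together with the higher-order residuals—the estimation error of $\w v^{(i)}$ and the cubic term—which a further round of moment bounds shows to be $O_\P(n^{-1}h^{-3d/2})$, yielding (\ref{borne2}).

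I expect the main obstacle to be twofold. First, controlling the random denominator rigorously enough that both the Taylor remainder and the bad-event contribution are genuinely negligible despite the division; the exponential tail in (A\ref{ash4}) is what makes the union bound over $n$ indices affordable. Second, carrying out the variance bookkeeping of the several degenerate $U$-statistics with the correct powers of $h$, which is the bulk of the routine-but-delicate labor. By contrast, the cancellation identity $T_0+T_1\approx\theta$ is the one genuinely load-bearing insight, and it is what makes the acceleration below the root-$n$ rate possible.
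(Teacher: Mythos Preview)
Your proposal is correct and rests on the same mechanism as the paper --- the root-$n$ fluctuations of the leading terms cancel and leave a degenerate $U$-statistic of size $n^{-1/2}h^{-d/2}$ --- but the implementation differs in several instructive ways. The paper expands $1/\w f_i$ around $1/f_h(X_i)$ with $f_h(x)=\E[K_h(X_j-x)]$, not around $1/f(X_i)$; since $\w f_i-f_h(X_i)$ has exact zero conditional mean, this eliminates the bias--variance cross terms that your $\Delta_i=\w f_i-f(X_i)$ carries and shortens the moment bookkeeping. Where you invoke the Hoeffding decomposition to exhibit the cancellation $T_0+T_1\approx\theta$, the paper achieves it by inserting and removing the term $\tilde\psi_1(X_i)=(\varphi f/f_h^2)_h(X_i)$ and $\E[\psi_1(X_1)]$ by hand, which produces the degenerate $U$-statistic $\w R_0$ directly (a trick credited to Vial); your route via Hoeffding is arguably more systematic. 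For the denominator the paper uses Devroye's almost-sure uniform convergence of $\w f$ rather than a concentration-plus-union-bound. Finally, the part you call ``a further round of moment bounds'' for the corrected estimator is where the real labor sits: the paper decomposes it into $\w R_3$ (the centered quantity $(f_h(X_i)-\w f_i)^2-\w v_i$, shown to be $O(n^{-1/2}h^{-d/2})$ via the Efron--Stein inequality) and $\w R_4,\w R_5$ (bounded by $O_\P(n^{-1}h^{-3d/2})$ via Rosenthal's inequality). Your plan is sound, but note that the $\w R_3$-type term is of the \emph{same} order $n^{-1/2}h^{-d/2}$ as your degenerate $U$-statistic from $T_0+T_1$, not $n^{-1}h^{-3d/2}$ as your last sentence suggests; it requires its own non-trivial variance computation rather than being absorbed into the higher-order residuals.
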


\begin{remark}\label{rem1}\normalfont

Assumption (A\ref{ash2}) about the smoothness of $f$ is crucial to guarantee the convergences stated in Theorem \ref{thelemma}. On the one hand, $r$ needs to be greater than $d$ to obtain convergence (\ref{borne1}), on the other hand, $r$ greater than $3d/4$ suffices to get convergence (\ref{borne2}). In the case where each previous assumption fails, there does not exist $h$ such that Equation (\ref{borne1}) or Equation (\ref{borne2}) hold. This phenomenon is often referred as the \textit{curse of dimensionality}. The choice of the bandwidth can be made regarding the $O_{\P}$ estimates in Theorem \ref{thelemma} and assuming that $h=Cn^{-a}$.  To select the parameter $a$, one can optimize the quantity in the $O_{\P}$ in order to derive the best possible rate of convergence. For instance, assuming that $r$ and $s$ are sufficiently large so that the first terms in equations (\ref{borne1}) and (\ref{borne2}) and the last term in Equation (\ref{borne2}) are negligible ($2r>3d$ and $2s>r-d/2$), we obtain the optimal rates $n^{-\frac{(r-d)}{2(r+d)}}$ and $n^{-\frac{(r-d/2)}{2(r+d/2)}}$ for bandwidth $h\propto n^{-\frac{1}{r+d}}$ and $h\propto n^{-\frac{1}{r+d/2}}$, respectively. As in the semiparametric problem studied in \cite{hardle1989} (see section 4.1), our estimator of $f$ is suboptimal with respect to the density estimation problem (see \cite{stone1980}). Indeed, to achieve the optimal rates of density estimation one needs to have $h \propto n^{-1/(2r+d)}$ which contradicts the fact that the bias goes to $0$ in Theorem \ref{thelemma}. However the choice of the constant $C$ in the bandwidth is not studied here. One can follow H\"ardle, Hart, Marron and Tsybakov (1992) and optimized an equivalent of the MSE, in order to obtain $C$. 
\end{remark}

\begin{remark}\label{rem1bis}\normalfont
Assumption (A\ref{ash2}) neglects the bias problems in the estimation of $f$ that may occur at the borders of $Q$. Indeed, if $f$ has a jump on the boundary of $Q$, then our estimate of $f$ would be asymptotically biased and the rates provided by Theorem \ref{thelemma} does not hold. To get ride of this problem, one can correct by hand the estimator, as for instance in \cite{jones1993}, or use Beta kernels as detailed in \cite{chen1999}. The simulations provided in Figure \ref{fig0} highlight how this problem affects the estimation by considering two different densities.
\end{remark}

\begin{figure}\centering
\includegraphics[width=13.6cm,height=8cm]{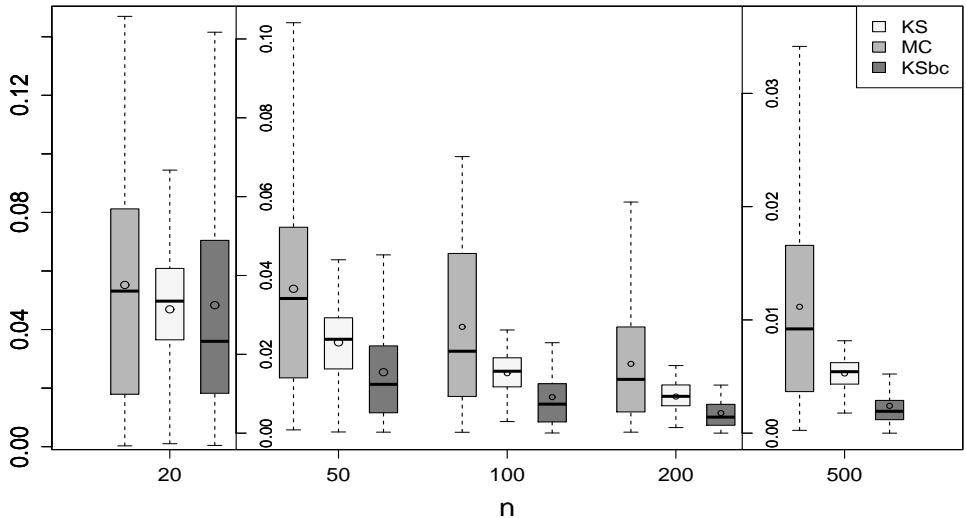} 
\caption{Boxplot over $100$ samples of the estimation error of $\int_0^1 \sin(\pi x)dx$, by the classical Monte Carlo procedure with $f=\mathds 1 _{[0,1]}$, noted MC; by the kernel smoothing with $f=\mathds 1 _{[0,1]}$, the Epanechnikov kernel and $h=n^{-1/3}$, noted KS; by the kernel smoothing with bias correction with $f=\mathds 1 _{[-h,1+h]}$ the Epanechnikov kernel and $h=n^{-1/3}$ noted KSbc; for different sample number.}\label{fig0}
\end{figure}

\begin{remark}\label{rem2} \normalfont
Assumption (A\ref{ash3}) basically says that $f$ is separated from $0$ on $Q$. The exponential bound on the kernel in Assumption (A\ref{ash4}) guarantee that $f$ is estimated uniformly on $Q$ (see \cite{devroye1980}). This leads to $(\inf_{x\in Q} \w f(x))^{-1}=O_{\P}(1)$ and helps to control the random denominator $\w f^{(i)}(X_i)$ in the expression of $\w I(\varphi)$ and $\w I_{cor}(\varphi)$. In the context of Monte Carlo procedure for integral approximation, Assumption (A\ref{ash2}) and Assumption (A\ref{ash3}) are not at all restrictive because it is always possible to draw the $X_i$'s from any probability distribution smooth enough and whose support contains the integration domain.
\end{remark}

\begin{remark}\label{rem3}\normalfont
The use of leave-one-out estimators in $\w I(\varphi)$ and $\w I_{cor}(\varphi)$ is not only justified by the simplification they involve in the proof (some diagonal terms disappear from the sums). It also leads to better convergence rates. For instance, let us consider the term $\w R_0$ in the proof of Equation (\ref{borne2}) in Theorem \ref{thelemma}. Replacing the leave-one-out estimator of $f$ by the classical one, $\w R_0$ remains a degenerate U-statistic but with nonzero diagonal terms. It is easy to verify that those terms lead to the rates $n^{-1/2}h^{-d})$ which is greater than the rate we found for $\w I_{cor}(\varphi)$.

\end{remark}

\begin{remark}\label{rem4}\normalfont

The function class $\EuScript H_s$ contains two interesting sets of 
functions that provide different rates of convergence in Theorem \ref{thelemma}. First, if $\varphi$ is 
$\alpha $-H\"older on $\R^p$ with bounded support, then $\varphi$ 
belongs to $\EuScript H_\alpha$. Secondly, if the support of $\varphi$ is a bounded convex set and 
$\varphi$ is $\alpha$-H\"older inside its support 
(e.g. the indicator of a ball)
then $\varphi\in\EuScript H_{\min(\alpha,1/2)}$ (see 
Theorem~\ref{holderremark} in the appendix). As a result, this loss of 
smoothness at the boundary of the support involves a loss in the rates 
of convergence (\ref{borne1}) and (\ref{borne2}). Precisely, 
whatever the smoothness degree of the function inside its support, if 
continuity fails at the boundary, rates are at most in $h^{1/2}$.

\end{remark}

\section{Estimating the linear functionals of a regression function}\label{s2}

Let $Q\subset \R^d$ be a compact set and $L_2(Q)$ be the space of squared-integrable functions on $Q$. We endowed $L_2(Q)$ with the canonical inner product so that it is an Hilbert space. We consider model (\ref{modelad}) assuming that $g\in L_2(Q)$. Let $\psi\in L_2(Q)$ be extended to $\R^d$ by $0$ outside of $Q$ ($\psi$ has compact support $Q$). The inner product in $L_2(Q)$ between the regression function $g$ and $\psi$, is given by
\begin{align*}
c =  \int_{} g(x)\psi(x)dx ,
\end{align*} 
note that if $\psi$ belongs to a given basis of $L_2(Q)$, then $c$ is a coordinate of $g$ inside this basis. We define the estimator
\begin{align*}
\w c  = n^{-1}\sum_{i=1}^n \frac{Y_i \psi(X_i)}{\w f^{(i)}(X_i)},
\end{align*} 
to derive the asymptotic of $\sqrt n (\w c -c )$, we use Model (\ref{modelad}) to get the decomposition
\begin{align}\label{decomp}
\sqrt n (\w c -c )= \w S + \w R,
\end{align}
with
\begin{align*}
\w R&= \ n^{-1/2} \left(\sum_{i=1}^n \frac{g(X_i)\psi(X_i)}{\w f^{(i)}(X_i)}-\int_{} g(x)\psi(x) dx  \right)\\
\w S &= n^{-1/2} \sum_{i=1}^n \frac{\sigma(X_i)\psi(X_i)}{\w f^{(i)}(X_i)}e_i.
\end{align*} 
Under some conditions, Theorem \ref{thelemma} provides that $\w R$ is 
negligible with respect to $\w S$. As a result, $\w S$ carries the weak 
convergence of $\sqrt n (\w c -c )$, and then the limiting distribution 
can
be obtained making full use of the independence between the $X_i$'s and 
the $e_i$'s. In order to follow this program, this assumptions are needed.

\begin{enumerate}[(\text{A}1)]\setcounter{enumi}{4}
\item \label{ash5} The function $\psi$ is H\"older on its support $Q\subset \R^d$ nonempty bounded and convex.
\item \label{ash6} The function $g$ is H\"older on $Q$ and $\sigma$ is bounded. 
\item \label{ash7} The bandwidth verifies $n^{1/2} h^r\r 0$ and $n^{1/2}h^{d}\r 
+\infty$ as $n$ goes to infinity.
\end{enumerate}

The following theorem is proved in the appendix. 

\begin{theorem}\label{thelemma2}
Assume that \emph{(A\ref{ash2}-A\ref{ash7})} hold, we have
\begin{align*}
n^{1/2}(\w c-c) \overset{\dd } {\lr} \mathcal N (0, v ),
\end{align*}
where $v$ is the variance of the random variable $ \frac{Y_1-g(X_1)}{f(X_1)} \psi(X_1)$.
\end{theorem}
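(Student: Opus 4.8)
The plan is to build on the decomposition (\ref{decomp}), $\sqrt n(\w c-c)=\w S+\w R$, and to prove separately that the bias term $\w R$ is asymptotically negligible while the noise term $\w S$ carries the Gaussian limit; the conclusion then follows from Slutsky's lemma. So the whole argument reduces to the two statements $\w R=o_\P(1)$ and $\w S\overset{\dd}{\lr}\mathcal N(0,v)$.

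First I would dispose of $\w R$. The key observation is that $\w R=n^{1/2}(\w I(g\psi)-\int g\psi)$ is exactly the quantity controlled by Theorem \ref{thelemma} applied to the function $\varphi=g\psi$. To invoke that theorem I must check that $g\psi$ lies in a Nikol'ski class with compact support: by (A\ref{ash5}) and (A\ref{ash6}) the functions $g$ and $\psi$ are H\"older on the bounded convex set $Q$ and $\psi$ vanishes outside $Q$, so the product $g\psi$ is H\"older inside $Q$ with support $Q$, and Remark \ref{rem4} then yields $g\psi\in\EuScript H_{\min(\alpha,1/2)}$ with $\min(\alpha,1/2)>0$. Since (A\ref{ash2}), (A\ref{ash3}), (A\ref{ash4}) are in force, Theorem \ref{thelemma} gives $\w R=O_\P(h^{s}+n^{1/2}h^{r}+n^{-1/2}h^{-d})$ for $s=\min(\alpha,1/2)$, and I would verify that each term vanishes under (A\ref{ash7}): $n^{1/2}h^r\to0$ forces $h\to0$ hence $h^s\to0$, the middle term is null by assumption, and $n^{-1/2}h^{-d}=(n^{1/2}h^d)^{-1}\to0$. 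Thus $\w R=o_\P(1)$.

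The core of the argument is the treatment of $\w S$, and this is where I expect the main obstacle. I would introduce the oracle statistic $\w S_0=n^{-1/2}\sum_i \sigma(X_i)\psi(X_i)f(X_i)^{-1}e_i$, obtained by replacing $\w f^{(i)}(X_i)$ with the true value $f(X_i)$. Since the pairs $(X_i,e_i)$ are i.i.d. and $\sigma(X_1)\psi(X_1)f(X_1)^{-1}e_1=(Y_1-g(X_1))\psi(X_1)f(X_1)^{-1}$ is centred (using $e_1$ centred and $X\indep e$) with finite variance $v$ (finiteness following from $\sigma$ bounded, $\psi$ bounded with support in $Q$, $f\ge b$ on $Q$ by (A\ref{ash3}), and $\E[e_1^2]<\infty$), the ordinary central limit theorem gives $\w S_0\overset{\dd}{\lr}\mathcal N(0,v)$. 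It then remains to show $\w S-\w S_0=o_\P(1)$. Here I would condition on $\mathcal X=(X_1,\dots,X_n)$: given $\mathcal X$ the increment $\w S-\w S_0=n^{-1/2}\sum_i\sigma(X_i)\psi(X_i)(\w f^{(i)}(X_i)^{-1}-f(X_i)^{-1})e_i$ is a sum of independent centred variables, so its conditional expectation is $0$ and its conditional variance equals $\E[e_1^2]\,n^{-1}\sum_i\sigma(X_i)^2\psi(X_i)^2(\w f^{(i)}(X_i)^{-1}-f(X_i)^{-1})^2$.

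To bound this conditional variance I would combine two ingredients. For the denominators, Remark \ref{rem2} together with (A\ref{ash3}) give $\sup_i\sup_{x\in Q}\w f^{(i)}(x)^{-1}=O_\P(1)$ and $f\ge b$ on $Q$; since $\psi$ is supported in $Q$ and $\sigma,\psi$ are bounded, the summand $\sigma(X_i)^2\psi(X_i)^2(\w f^{(i)}(X_i)^{-1}-f(X_i)^{-1})^2$ is, up to a multiplicative $O_\P(1)$ factor, at most $\mathbf 1_{X_i\in Q}(\w f^{(i)}(X_i)-f(X_i))^2$. The remaining average is small in mean, and this is where the leave-one-out construction is essential: $\w f^{(i)}$ is independent of $X_i$, so $\E[\mathbf 1_{X_i\in Q}(\w f^{(i)}(X_i)-f(X_i))^2]=\int_Q\E[(\w f^{(i)}(x)-f(x))^2]f(x)\,dx$, which by the standard bias–variance bound for kernel density estimators under (A\ref{ash2}) and (A\ref{ash4}) is $O(h^{2r}+(nh^d)^{-1})$. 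Under (A\ref{ash7}) this tends to zero, since $n^{1/2}h^r\to0$ gives $h^{2r}=o(n^{-1})$ and $nh^d=n^{1/2}(n^{1/2}h^d)\to+\infty$. Hence the conditional variance is $o_\P(1)$, and a conditional Markov inequality yields $\w S-\w S_0=o_\P(1)$. Combining, $\w S=\w S_0+o_\P(1)\overset{\dd}{\lr}\mathcal N(0,v)$, and together with $\w R=o_\P(1)$ Slutsky's lemma closes the proof.
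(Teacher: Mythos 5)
Your proof is correct and takes essentially the same route as the paper: the same decomposition $\sqrt n(\w c-c)=\w S+\w R$, with $\w R$ handled by Theorem \ref{thelemma} applied to $\varphi=g\psi$, and $\w S$ compared to the oracle sum (the paper's $\w S_1$) by conditioning on the design, bounding the conditional variance using the uniform lower bound on $\w f^{(i)}$ and an $L_1$ bound of order $h^{2r}+(nh^d)^{-1}$, and concluding by the CLT and Slutsky. The only cosmetic differences are that you obtain that $L_1$ bound directly from leave-one-out independence and the standard pointwise bias--variance estimate where the paper invokes its Rosenthal-type inequality (\ref{rose}) and Lemma \ref{bochner}, and that you are more explicit than the paper in verifying, via Theorem \ref{holderremark}, that $g\psi$ lies in the Nikol'ski class required by Theorem \ref{thelemma}.
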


\begin{remark}\label{rem5}\normalfont
The set $Q$ reflects the domain where $g$ is studied. Obviously, the more dense the $X_i$'s in $Q$, the more stable the estimation. Nevertheless, it could happened that $f$ vanishes somewhere on $Q$ and this is not taken into account by our framework. In such situations, one may adapt $Q$ from the sample such that the estimated density does not take too small values. This method called \textit{trimming} (employed for instance in \cite{hardle1989}) guarantees computational stability as well as some theoretical properties. Even if such an approach is feasible here, it involves much more technicalities in the proofs and may cause a loss in the clarity of the statements.
\end{remark}

\begin{remark}\label{rem5bis}\normalfont
The nonstandard convergence rates observed in Theorem \ref{thelemma} 
impacts Theorem \ref{thelemma2} in the following way. Let us compare both estimate $\w c$ and $\widetilde c  = n^{-1}\sum_{i=1}^n Y_i \psi(X_i)f(X_i)^{-1}$ where the latter requires to know $f$. First, if the signal is observed without noise, that is $Y_i=g(X_i)$, then $n^{1/2}(\w c-c) $ goes to $0$ in probability and  $\widetilde c$ is asymptotically normal. Secondly, when there is some noise in the observed signal, that is $e_i\neq 0$, the comparison can be made regarding their asymptotic variances. Since we have
\begin{align}\label{varcomp}
v= \var\left(\frac{Y_1}{f(X_1)} \psi(X_1)\right) -\var\left(\frac{g(X_1)}{f(X_1)} \psi(X_1)\right)\leq \var(n^{1/2}(\widetilde c-c)),
\end{align}
it is asymptotically better to plug the nonparametric estimator of $f$ than to use $f$ directly.

\end{remark}

\section{Applications to multiple index models}\label{s3}

\subsection{Average derivative estimator}

\label{cade}

The multiple index model is defined as Model (\ref{modelad}) with the specification 
\begin{align}\label{index}
g(x)=g_0( \beta^Tx), \qquad \text{for every }x\in \R^{d}	,
\end{align} 
where $\beta\in \R^{d\times p}$, and $p$ is minimal. Under some conditions, essentially 
that $X_1$ has a density \cite{portier2013}, $E=\spann (\beta)$ is unique, it is called the \textit{index space} and the term \textit{index} denotes any of its basis. From now, we assume that $E$ is unique. Our approach is based on the gradient of the regression curve since $\nabla g(x) \in E$.

Under some regularity conditions (see \cite{powell1989}), by the integration by parts formula, we have that
\begin{align}\label{ipp}
\beta_{\psi} = \int  g(x) \nabla\psi(x)dx  =-\int \nabla g(x) \psi(x) dx \in E,
\end{align}
for any smooth function $\psi:\R^d \r \R$. In view of Theorem \ref{thelemma2}, the following estimator
\begin{align}\label{betachap}
\w\beta_{\psi} = n^{-1} \sum_{i=1}^n \frac {Y_i\nabla \psi(X_i)}{\w f^{(i)}(X_i)},
\end{align} 
is root $n$-consistent in estimating a direction of the index space. 
By applying Theorem \ref{thelemma2}, we obtain the following corollary where (A\ref{ash5}) becomes
\begin{enumerate}[(\text{A}1)]\setcounter{enumi}{4}
\item \label{ash5prime} The function $\nabla \psi$ is H\"older on its support $Q\subset \R^d$ nonempty bounded and convex.
\end{enumerate}

\begin{corollary}\label{cor}
Assume that \emph{(A\ref{ash2}-A\ref{ash7})} hold, we have
\begin{align*}
n^{1/2}(\w\beta_{\psi}-\beta_{\psi}) \overset{\dd } {\lr} \mathcal N (0, v ),
\end{align*}
where $v$ is the variance of the random variable $ \frac{Y_1-g(X_1)}{f(X_1)} \nabla \psi(X_1)$.
\end{corollary}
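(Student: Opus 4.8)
The plan is to reduce the multivariate statement to the scalar Theorem~\ref{thelemma2} by means of the Cram\'er--Wold device. The key observation is that $\w\beta_\psi$ is simply the vector of estimators $\w c$ from Section~\ref{s2}, one for each partial derivative of $\psi$: its $j$-th coordinate is exactly $\w c$ with the scalar function $\psi$ replaced by $\partial_j\psi$, while the target $\beta_\psi=\int g(x)\nabla\psi(x)dx$ has $j$-th coordinate $c=\int g(x)\partial_j\psi(x)dx$. Since $\w\beta_\psi$ and $\beta_\psi$ both lie in $\R^d$ (recall $\nabla\psi(x)\in\R^d$), it suffices to establish, for every fixed $a\in\R^d$, the one-dimensional convergence of $a^T n^{1/2}(\w\beta_\psi-\beta_\psi)$ to the correct Gaussian limit, and then invoke Cram\'er--Wold.

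First I would fix $a\in\R^d$ and introduce the scalar function $\psi_a = a^T\nabla\psi = \sum_{j=1}^d a_j\partial_j\psi$. By linearity,
\begin{align*}
a^T\w\beta_\psi = n^{-1}\sum_{i=1}^n\frac{Y_i\,\psi_a(X_i)}{\w f^{(i)}(X_i)},\qquad a^T\beta_\psi=\int g(x)\psi_a(x)dx,
\end{align*}
so that $a^T n^{1/2}(\w\beta_\psi-\beta_\psi)$ is precisely $n^{1/2}(\w c-c)$ for the choice $\psi\leftarrow\psi_a$. The next step is to check that $\psi_a$ meets the hypotheses of Theorem~\ref{thelemma2}. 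Assumptions (A\ref{ash2})--(A\ref{ash4}), (A\ref{ash6}) and (A\ref{ash7}) do not involve $\psi$ and are inherited unchanged. For the remaining assumption, each $\partial_j\psi$ is H\"older on the nonempty bounded convex set $Q$ by (A\ref{ash5prime}), hence so is the finite linear combination $\psi_a$, with the same support $Q$; this is exactly the original hypothesis (A\ref{ash5}) for the scalar function $\psi_a$.

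Applying Theorem~\ref{thelemma2} to $\psi_a$ then yields
\begin{align*}
a^T n^{1/2}(\w\beta_\psi-\beta_\psi)\overset{\dd}{\lr}\mathcal N(0,v_a),\qquad v_a=\var\!\left(\frac{Y_1-g(X_1)}{f(X_1)}\psi_a(X_1)\right).
\end{align*}
Writing $\psi_a(X_1)=a^T\nabla\psi(X_1)$ and factoring the constant vector $a$ out of the variance gives $v_a=a^T v a$, where $v$ is the covariance matrix of the $\R^d$-valued variable $\tfrac{Y_1-g(X_1)}{f(X_1)}\nabla\psi(X_1)$. As this holds for every $a\in\R^d$, the Cram\'er--Wold theorem delivers the claimed joint convergence $n^{1/2}(\w\beta_\psi-\beta_\psi)\overset{\dd}{\lr}\mathcal N(0,v)$. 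The only point requiring genuine care is the passage from marginal to joint convergence, which is exactly what Cram\'er--Wold supplies; everything else is bookkeeping to confirm that each one-dimensional projection falls within the scope of Theorem~\ref{thelemma2}, that (A\ref{ash5prime}) furnishes (A\ref{ash5}) for the projected function $\psi_a$, and that the limiting variance is the quadratic form $a^T v a$. Note that the integration-by-parts identity~(\ref{ipp}), which places $\beta_\psi$ in the index space $E$, plays no role in the convergence itself and is used only to interpret the estimand.
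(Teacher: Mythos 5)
Your proposal is correct and matches the paper's approach: the paper obtains Corollary \ref{cor} by directly applying Theorem \ref{thelemma2} with the H\"older assumption transferred from $\psi$ to $\nabla\psi$, giving no further detail. Your Cram\'er--Wold reduction to the scalar functions $\psi_a=a^T\nabla\psi$, together with the verification that each $\psi_a$ satisfies the hypotheses of Theorem \ref{thelemma2} and that the limiting variance is the quadratic form $a^Tva$, is precisely the routine bookkeeping that the paper leaves implicit.
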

In order to recover the whole space $E$, we have to compute several $\w \beta_{\psi}$, say $(\w \beta_{1},\cdots,\w \beta_{K})$ associated with several functions $\psi=\psi_1,\dots\psi_K$ 
and assume in addition that Equation (\ref{ipp}) holds true for each $\w \beta_k$. The estimate  $\w E$ of $E$
will be taken as the $p$-dimensional space from which the $\w\beta_k$'s are the closest;
there is several ways to do this (PCA, weighted PCA...) and they will be presented in the next section. Note that we assume that the dimension $p$ of $E$ is known, in practice it can be estimated using hypothesis testing \cite{portier2014}.

As the ADE method \cite{hardle1989}, the method we have just described is based on the integration by part formula (\ref{ipp}). As a result, our method may be seen as a new version of ADE, called average derivative estimator by test functions (ADETF). The main difference between ADE and ADETF is that ADE puts $\psi=f$ so that their estimates only recover a single direction. This problem has been circumvented in the recent study \cite{zeng2010} where the authors consider $\psi=\widetilde \psi\nabla f+\nabla\widetilde \psi f$ for some $\widetilde \psi$. First, by considering different functions $ \psi$, our estimator is able to recover the multiple index. Secondly, comparing to both latter references, our approach does not need to estimate the derivatives of the density, and as a result does not require to select two different bandwidths. Moreover the presence of $\nabla \w f$ in ADE may induce an unnecessary noise that could affect badly the estimation. In the asymptotic variance of ADE
\begin{align}\label{varade}
\var\left(\nabla g(X) + \frac{(Y-g(X))\nabla f(X)}{f(X)}\right),
\end{align}
see Theorem 3.1 of \cite{hardle1989}, this is reflected by the additional term $\nabla g(X)$ that does not affect the variance of ADETF provided in Corollary \ref{cor}.

\subsection{Parameter setting}
\label{parset}
\paragraph{Choice of the bandwidth and the kernel.} Theoretical results provided by Corollary \ref{cor} require the use of a high order kernel to reduce the bias. Since our simulations have highlighted that the use of high order kernels are not as crucial in practice as in theory, we consider the Epanechnikov radial kernel given by
\begin{align*}
K(x)\propto (1-\|x\|^2),
\end{align*}
such that $\int K=1$. Contrarily to ADE, it turns out that ADETF is not really affected by the choice of the bandwidth. As a result, in the whole study, we select the optimal bandwidth for ADE and we put $h=2sn^{-1/(d+2)}$ for ADETF, where $s$ is the estimated standard deviation of $X$.
\paragraph{Choice of the test functions.} We define
\begin{align*}
\psi(x) = \widetilde{\psi}(& h_0 ^{-1}\|x\|)\qquad \t{with}\quad \widetilde{\psi}(z)=(1-z)^2(1+z)^2\mathds{1}_{\{|z|<1\}}
\end{align*}
where the scaling parameter $h_0$ is equal to the empirical estimator of $s=\E[\|X-\E[X]\|^2]^{1/2}$,
and our test functions are
\begin{align*}
\psi_k(x) = \psi(x-t_k),~~~k=1,\dots K.
\end{align*}
Observing that better results are obtained if we do not restrict ourselves to
a small value of $K$, we ended up with the simple choice $t_k=X_k$. 

\paragraph{Computation of the directions.} 
We have to extract $p$ directions from $(\w \beta_k)_{k=1,...,n}$. Two  
approaches can be used and combined.
\begin{enumerate}[a)]
\item \label{refresh} Use a criterion of dependence between $Y$ and $\w\beta_k^T X$ to select
among the $\w\beta_k$'s.
\item \label{acp} Choose the best direction through a PCA of these vectors.
\end{enumerate}
The set $(\w \beta_k)_{k=1,...,n}$ is an heterogeneous family of estimated vector. Indeed because our choice was to visit every design point with the functions $\psi_k$'s (in order not to loose information), some vectors in $(\w \beta_k)_{k=1,...,n}$  have a high variance and a large bias. To cancel their bad effect, we conduct step \ref{refresh}) by selecting the root $n$ vectors among the $\beta_k$'s that have the larger dependence criterion
\begin{align*}
\sum_{h,h'}\frac{ \left(p_{hh'}-\overline{p_{hh'}}^h\ \overline{ p_{hh'}}^{h'}\right)^2   }{\overline{p_{hh'}}^h\ \overline{ p_{hh'}}^{h'}} 
\end{align*} 
where $p_{h,h'}=\frac{1}{n} \sum_{i=1}^n \mathds{1}_{\{Y_i \in I_h\}} \mathds{1}_{\{(\beta_k^TX_i)\in J_{h'}\}}$ and $\overline{\ \cdot\ }^h$ is the mean over $h$. The partitions $(I_h)$ and $(J_h)$ have been defined having $\ulcorner \sqrt(n) \urcorner$ elements with equal sized (except the last). After this refinement we conduct step \ref{acp}), i.e. a PCA on the remaining vector $(\beta_k)_{k\in S}$. That is our final estimate of the \textit{index} is given by the $p$ eigenvectors of
\begin{align*}
\sum_{k\in S} \beta_k\beta_k^T
\end{align*}
associated with the $p$-largest eigenvalues.


\subsection{Simulations}

The ADETF method follows a typical semiparametric approach characterized by mild assumptions on the design but that requires the nonparametric estimation of the density. In a different spirit, a well known competitor is the approach called \textit{inverse regression} \cite{li1991}, that needs the \textit{linearity condition} (slightly weaker than ellipticity of the distribution of $X_1$). In the following simulation study, we compare the estimation of the index space $E$ given by ADE and ADETF with the one given by inverse regression methods, namely \textit{Sliced inverse regression} (SIR) \cite{li1991} and \textit{Sliced average variance estimation} (SAVE) \cite{cook1991}. One remarks that in the whole simulation study, the predictors are drawn from the Gaussian distribution. This is quite a comfortable situation for SIR and SAVE since they are not penalized by the restrictive framework they impose. For each estimate $\w E$ of $E$, we compute the estimation error with
\begin{align}\label{estimationerror}
\|\w P-P\|_\text{F},
\end{align}
where $ P$ (resp. $\w P$) is the orthogonal projector on $E$ (resp. $\w E$) and $\|\cdot\|_\text{F}$ is the Frobenius norm. In each situation, we assume that the dimension of $E$ is known.
\subsubsection{The models}

\paragraph{Model \cRM{1}.} We first consider
\begin{align*}
 Y= (\beta^TX) \sin\left( \beta^TX\right)+e,
\end{align*}
where $X= (X^{(1)},\ldots,X^{(p)})\overset{\dd}{=}\mathcal N (0,I)$, $e\overset{\dd}{=}\mathcal N (0,1)$. It is well known \cite{cook1991} that the SIR method fails when the link function is symmetric whereas SAVE achieves consistency. As a result, we run ADE, ADETF and SAVE on Model \cRM{1} with different values of the parameters $n$ and $p$. The boxplot are provided in Figure \ref{fig1}.

\paragraph{Model \cRM{2}.}From now we fix $p=6$ and $n=200$ (this illustrates situations quite difficult) and we focus on different link functions, each representing interested situations. In order to better understand how do the symmetries in the link function influence the methods, we generate 
\begin{align*}
 Y= \cos\left( \frac \pi 2 ( X^{(1)}-\mu)\right)+0.5e,
\end{align*}
with $\mu\in \R$. In our simulation, we try different values of $\mu$ from $0$, which correspond to a symmetric link function, to $1$. The boxplots are provided in Figure \ref{fig2}. 
\paragraph{Model \cRM{3}.}To highlight how the methods behave facing link functions with different level of fluctuations, we consider
\begin{align*}
\t{Model \cRM{3}:}\qquad \qquad Y= \tau \sin\left(  X^{(1)}/\tau\right)+0.5e,
\end{align*}
with $\tau\in \R$. For different values of $\tau$, we provide the boxplots of the errors in Figure \ref{fig3}. 

\paragraph{Model \cRM{4}.}We conclude by a two dimensional model defined as
\begin{align*}
\t{Model \cRM{4}:}\qquad \qquad Y=\frac{\sin(2 X^{(1)})}{.5+|1+X^{(2)}|} +\sigma e,
\end{align*}
where we found interesting to consider different values of $\sigma$. The method ADE does not appear because it only estimates a single direction.

\begin{figure}\centering
\includegraphics[width=14cm]{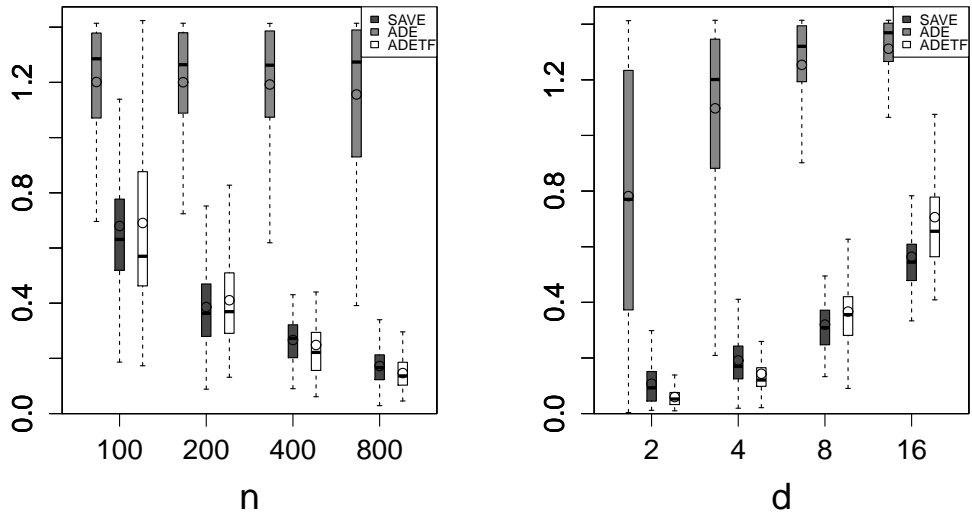} 
\caption{Boxplot over $100$ samples of the estimation error (\ref{estimationerror}) of SAVE, ADE and ADETF in the case of Model \cRM{1}, for different values of $d$ (when $n=400$) and different values of $n$ (when $d=6$).}\label{fig1}
\end{figure}

\begin{figure}\centering
\includegraphics[width=14cm]{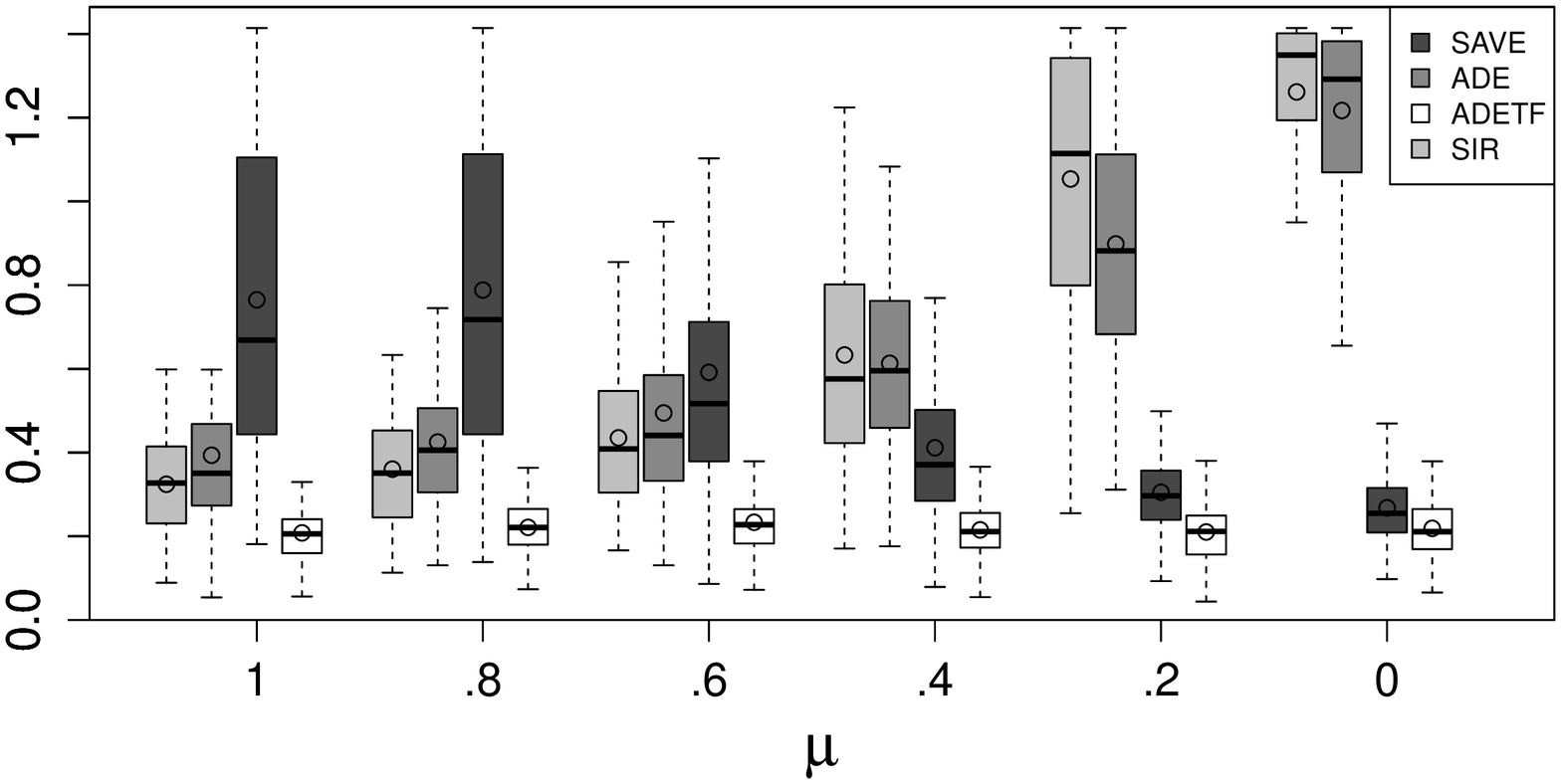} 
\caption{Boxplot over $100$ samples of the estimation error (\ref{estimationerror}) of SIR, SAVE, ADE and ADETF in the case of Model \cRM{2}, when $n=200$ and for different values of $\mu$.}\label{fig2}
\end{figure}

\begin{figure}\centering
\includegraphics[width=14cm]{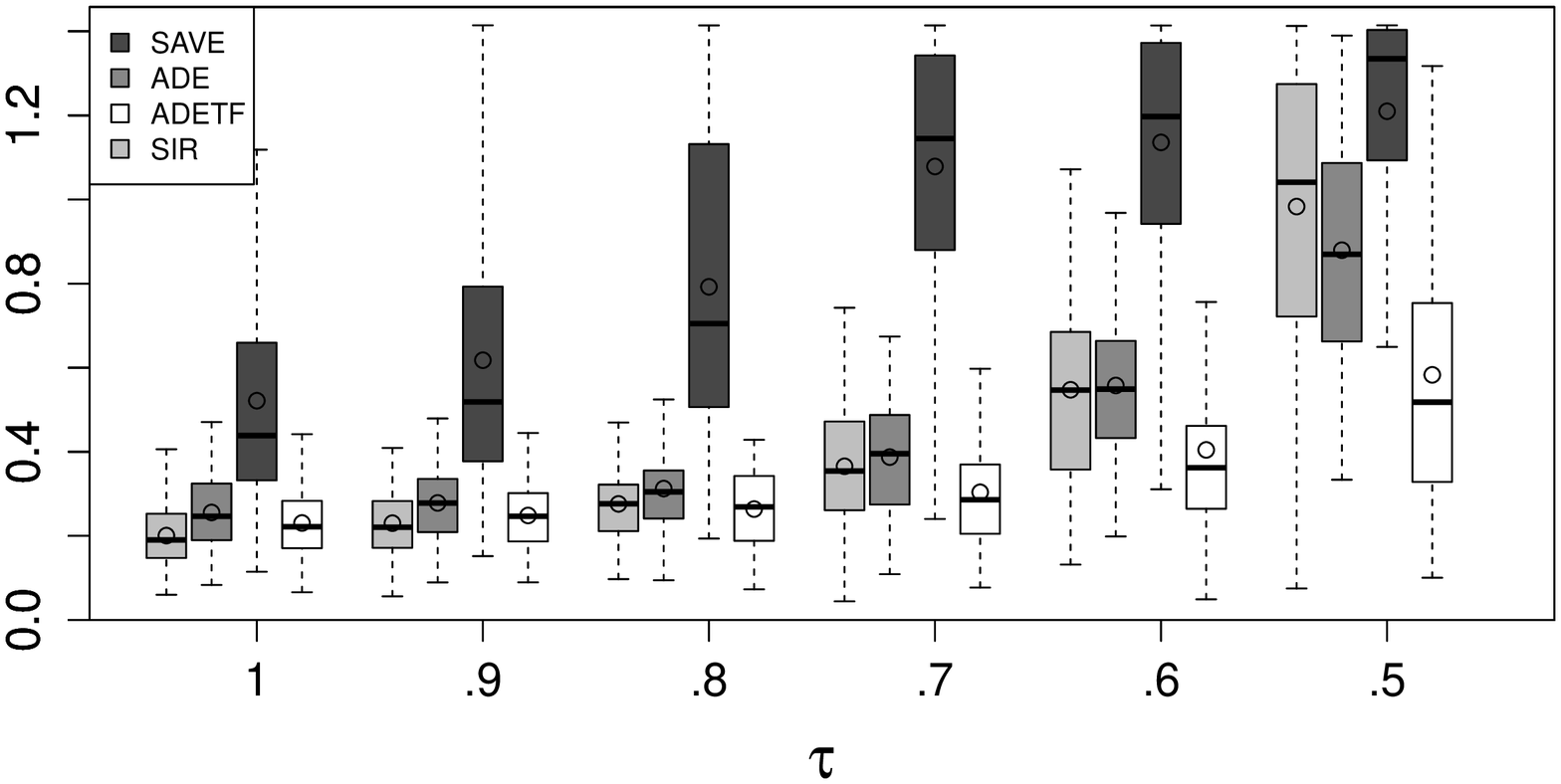} 
\caption{Boxplot over $100$ samples of the estimation error (\ref{estimationerror}) of SIR, SAVE, ADE and ADETF in the case of Model \cRM{3}, when $n=200$ and for different values of $\tau$.}\label{fig3}
\end{figure}

\begin{figure}\centering
\includegraphics[width=14cm]{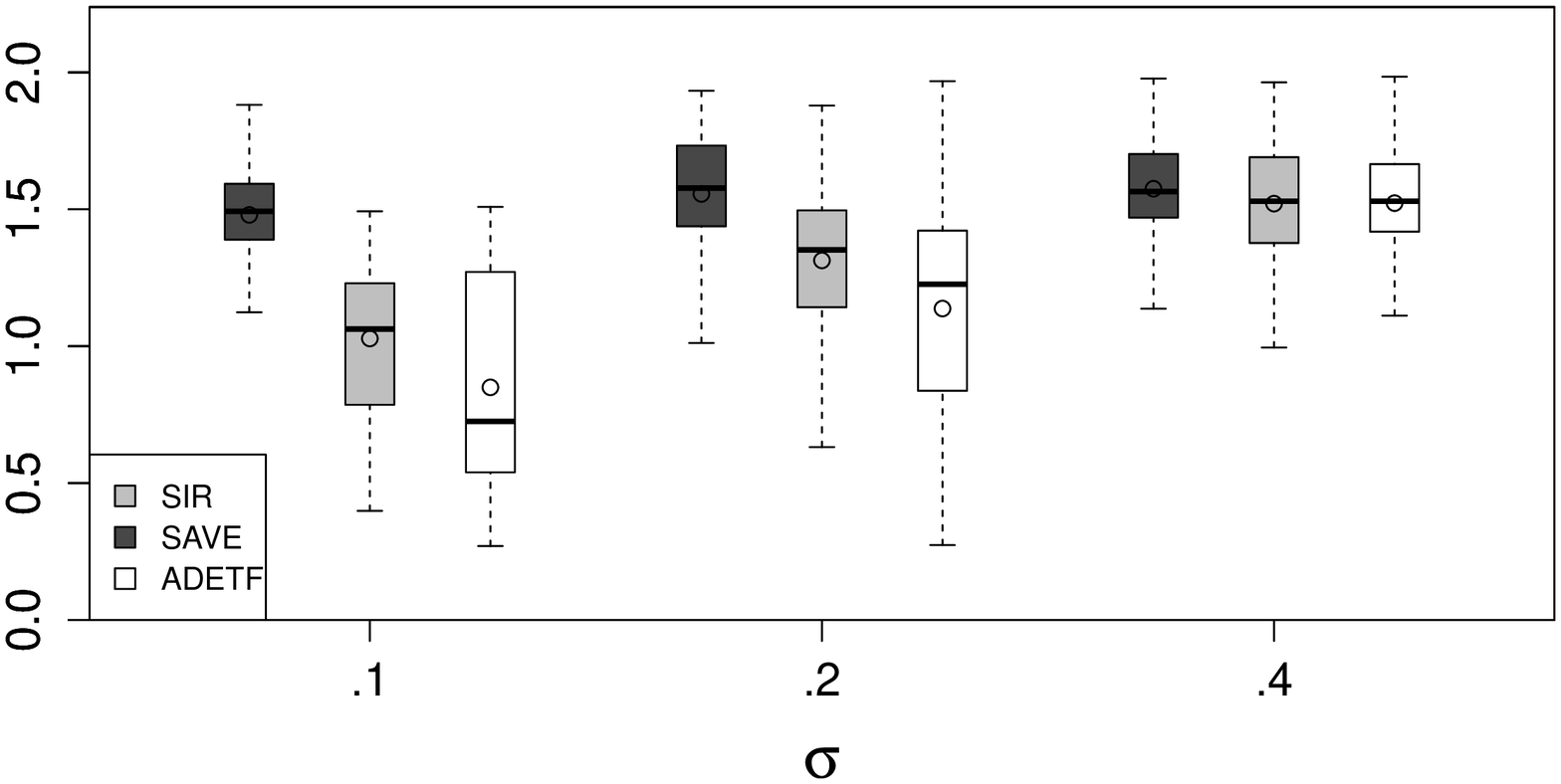} 
\caption{Boxplot over $100$ samples of the estimation error (\ref{estimationerror}) of SIR, SAVE, ADE and ADETF in the case of Model \cRM{4}, when $n=200$ and for different values of $\sigma$.}\label{fig4}
\end{figure}

\subsubsection{Interpretation of the results}

In figure \ref{fig1}, one remarks the accuracy of SAVE and ADETF whereas ADE fails completely to estimate the index. Asymptotically, ADETF becomes better than SAVE whereas SAVE seems to be more robust than ADETF when $d$ increase. The reason for this behavior when $d$ increases is the so called \textit{curse of dimensionality} raised in Remark $\ref{rem1}$. 

In figure \ref{fig2}, we analyse more in details how does the symmetry impact the methods. We remark that SIR and ADE produce similar poor estimate when the link function is symmetric. On the other hand, while SAVE is consistent in the presence of symmetry it seems to fail when the function is odd. Indeed, whereas SAVE and SIR and ADE seem to perform symmetrically with respect to the value of $\mu$, ADETF remains stable.

In figure \ref{fig3} and \ref{fig4}, we see that ADETF is more robust to the variation of the scale than other methods as SIR or ADE. In the two dimensional model, one may see that ADETF produce the better estimate for every level of noise considered.

\subsection{Adaptive ADE}

Unfortunately ADE and ADETF are subject to the so called \textit{curse of dimensionality}. As highlighted in Remark \ref{rem1}, the larger the dimension $d$ the smoother the density $f$ needs to be. Moreover, even if the density is smooth enough, one needs to use a high order kernel that may has poor performance at small sample size. In order to minimize bad effects of high dimension, we introduce the following adaptive strategy.

In \cite{juditsky2001} the authors proposed to estimate $\beta$
by an averaging of $\nabla g$ using a \textit{local linear estimator} \cite{fan1996} of $g$. 
In order to attain the root $n$ consistency, their estimator 
needs to be  improved via an adaptive procedure. 
 The idea is simple:
once $\beta$ is estimated, one could think of running once more the estimation procedure
in the reduced space in order to get advantage of the dimension reduction.
The point is this cannot be done exactly since the reduction space remains unknown; 
however the authors proved that using an estimate of $\beta$ with a suitable implementation, this idea is fruitful 
theoretically as well as practically.

All this is in theory not necessary in our case since, if $f$ is regular enough, 
the root $n$ consistency is achieved whatever the dimension, but we observe that
this refinement  procedure gives good results in practice.
Following their idea we notice that  for any test function $\psi$
\begin{align}\label{eqadaptivity}
\E\left[\frac {Y_1A\nabla \psi(AX_1)}{f_{|AX_1}(AX_1)}\right] = - \E\left[\frac {\nabla g (X_1) \psi(AX_1)}{f_{|AX_1}(AX_1)}\right]\in E \qquad\text{provided that }E\subset \spann (A),
\end{align}
where $f_{|AX_1}$ is the density of $AX_1$. 
For any $A$ we have the estimator
\begin{align}\label{yad}
\w \beta_\psi(A)=n^{-1} \sum_{i=1}^n \frac{Y_iA \nabla \psi ( A X_i ) )}{\w f_{|AX_1}(A X_i)}  ,
\end{align}
with 
\begin{align*}
\w f_{|AX_1}(x)= (nh^d)^{-1} \sum_{i=1}^n K(h^{-1} (AX_i-x)),\qquad \text{for every } x\in \R^p.
\end{align*} 
After an initial estimation $\w \beta$ obtained with $A=Id$ and several test functions 
$\psi_1,\dots \psi_K$,
we take $A= \w \beta\w \beta^T +\epsilon I$ as in \cite{juditsky2001} and obtain 
a second estimator whose window has been stretched in the interesting direction, 
i.e. the direction where $g$ varies. This procedure might be iterated several times with $h$ and $\epsilon$ decreasing. 

The theoretical study and the implementation details require much more work that seems to be beyond the scope of the present paper. This could be done following the well documented semiparametric literature on the subject \cite{juditsky2001}, \cite{dalalyan2008} and \cite{xia2007}.

\section{A remark about the generalization of Theorem 1}\label{generalizing}

In view of the intriguing convergence rates stated in Theorem 
\ref{thelemma}, one may be curious to know the behavior of our 
estimator when estimating more general functionals with the form 
\begin{align*}
 I_T= \int_{}T(x,f(x))dx,
\end{align*}
where $T:\R^{d}\times \R^+ \r \R$ is such that $y\mapsto T(x,y)$ has a second order derivative bounded uniformly on $x$. 
Following the approach of Section \ref{s1}, the estimator we consider is 
\begin{align}\label{formestimate}
\w 
I_{T} = n^{-1} \sum_{i=1}^n \frac{T\big(X_i,\w f^{(i)}(X_i)\big)}{\w f^{(i)}(X_i)}.
\end{align}
The study of the asymptotic behavior of $\sqrt n (\w I_{T}- I_{T})$ generalizes Theorem \ref{thelemma}. It turns out that the case $ T:(x,y)\mapsto \varphi(x)$ is the only case where the rates are faster than root $n$. For other functionals, $\sqrt n (\w I_{T}- I_{T})$ converges to a normal distribution. In view of the negative aspect of the following results with respect to those of Theorem 
\ref{thelemma}, we provide an 
informal calculation that leads to the asymptotic law of $\sqrt n (\w 
I_{T}-I_{T})$. By assumption on $T$, using a Taylor expansion with respect to the second coordinate of $T$, we have
\begin{align*}
n^{1/2} (\w 
I_{T}-I_{T}) =  n^{-1/2} \sum_{i=1}^n \left( \frac{T(X_i,f(X_i))}{\w f_i}- I_{T}+\frac{\partial _y T(X_i,f(X_i)) 
( \w f_i-f(X_i))}{\w f_i}\right)  +\w R_2,
\end{align*}
with 
\begin{align*}
|\w R_2| \leq C n^{-1/2}\sum_{i=1}^n \frac{(\w f_i-f(X_i))^2}{\w f_i}= O_{\P} (n^{1/2}h^{2r}+ n^{-1/2}h^{-d})
\end{align*}
due to equations (\ref{unifb}) and (\ref{sansnom}). Then, we write
\begin{align*}
\sqrt n (\w I_{T}-I_{T}) = \w R_0+\w R_1+\w R_2,
\end{align*}
with
\begin{align*}
\w R_0&=  n^{-1/2} \sum_{i=1}^n \frac{T(X_i,f(X_i))}{\w f_i} -I_T -\frac{\partial _y T(X_i,f(X_i)) 
f(X_i)}{\w f_i}+\int \partial _y T(x,f(x)) 
f(x)dx \\
\w R_1&= n^{-1/2} \sum_{i=1}^n \partial _y T(X_i,f(X_i))-\int \partial _y T(x,f(x)) 
f(x)dx.
\end{align*}
Provided Theorem \ref{thelemma} can be applied two times, we show that $\w R_0=o_{\P}(1)$. As a consequence $\sqrt n (\w I_{T}-I_{T})=o_{\P}(1)$ if and only if the variance of $\w R_1$ is degenerate, that is equivalent to
\begin{align*}
 \partial _y T(X_i,f(X_i)) = c \qquad \text{a.s.}
\end{align*}
If we want this to be true for a reasonably large class of distribution function, it would imply
\begin{align*}
 \partial _y T(x,y) = c \qquad \text{for all } (x,y)\in \R^d\times \R^+, 
\end{align*}
the solutions have the form $T(x,y)= \varphi(x) + cy$.

\section{Concluding remarks}
%


There exists some links between Theorem \ref{thelemma2} and nonparametric estimation. Those links are beyond the scope of this article but can be the subject of further research. Indeed, Theorem \ref{thelemma2} is not so far from dealing with nonparametric regression. On the one hand, one can use it for the estimation of the Fourier (or wavelet) coefficient in the $L_2$ expansion of $g$
\begin{align*}
\w c_k(g) = n^{-1}\sum_{i=1}^n \frac{Y_i \psi_k(X_i)}{\w f^{(i)}(X_i)},
\end{align*}
($\psi_k$ is the Fourier $L_2$ basis), this would lead to projection estimates
\begin{align}\label{conv1}
\sum_{k=1}^{K}\w c_k(g) \psi_k(y),
\end{align}
and estimates by shrinkage. On the other hand, one can similarly define the kernel estimator
\begin{align}\label{conv2}
\sum_{i=1}^n \frac{Y_i K_{h_2}(X_i-x)}{\sum_{j=1}^n K_{h_1}(X_j-X_i) },
\end{align}
where $h_1$ and $h_2$ are bandwidths each linked with the estimation of $f$ and the regularization of $g$, respectively. Similar estimators of the regression function have already been introduced in the case of unknown random design (density $f$). Estimate (\ref{conv1}) is linked with the estimate (3.3.6) p.51 of \cite{hardle1990}, studied in \cite{rutkowski1982}, whereas estimate (\ref{conv2}) is reminiscent of the Gasser-Muller estimator \cite{gasser1979}. Both latter estimates are called convolution estimator of the regression because they estimate directly $<g,K_h(\cdot-y)>$ whereas the most popular approach, inspired by the Naradaya-Watson estimate, has been to estimate separately $<g f,K_h(\cdot-y)>$ and $<f,K_h(\cdot-y)>$ by simple empirical means, $\w{gf}$ and $\w{f}$ respectively, and then to estimate $g$ by $\w{ g f}/\w f$. It would be interesting to understand how Equation (\ref{conv1}) or (\ref{conv2}) could improve the estimation of $g$, work along this line is under progress.

\section{Proofs}

\subsection{Proof of Theorem \ref{thelemma}}

For clarity, we introduce the following notation
\begin{align*}
&K_{ij}=h^{-p}K(h^{-1}(X_i-X_j))\\
&\w f_i =\frac1{n-1} \sum_{j\neq i}^n K_{ij}\\
&\w v_i = \frac1{(n-1)(n-2)} \sum_{j\neq i}^n (K_{ij}- \w f_i)^2 ,
\end{align*}
and for any function $g :\R^p \r \R$, we define
\begin{align} \label{noth}
 g_{h}(x) = \int g(x+hu)K(u)du.
\end{align}
We start by showing (\ref{borne2}), then (\ref{borne1}) will follow straightforwardly.

\noindent{\textbf{Proof of (\ref{borne2}):}} 

The following development reminiscent of the Taylor expansion
\begin{align*}
\frac {1}{\w f_i } =  \frac{1} {f_h(X_i)} 
+ \frac {f_h(X_i) -\w f_i } {f_h(X_i)^2}
+\frac {(f_h(X_i) -\w f_i )^2} {f_h(X_i)^3}
 +\frac {(f_h(X_i)-\w f_i )^3} {\w f_i f_h(X_i)^3},
\end{align*}
allows to expand our estimator as a sum of many terms where the density estimate $\w f_i$ is moved to the numerator,
with the exception of the fifth one. We will show that this last term goes quickly to $0$. For the linearised terms, this is very messy because the correct bound will be obtained by expanding also $\w f_i$ in those expressions. 
In order to sort out these terms, we borrow to Vial \cite{vial2003} the trick 
of making appear a degenerate $U$-statistic in such a development (by inserting the right quantity in $\w R_0$ below).
More explicitly, recalling that
\begin{align*}
n^{1/2} \left(\w I_{cor}(\varphi)
 -\int \varphi(x)  dx \right)= n^{-1/2} \left(\sum_{i=1}^n \frac{\varphi(X_i)}{\w f_i}\Big(1-\frac{\w v_i}{\w f_i^2}\Big)
-\int \varphi(x) dx\right),
\end{align*}
using the notations
\begin{align*}
&\psi_q(x)=\frac{\varphi(x)}{f_h(x)^q},~~~q\in\mathbb N,\\
&\tilde\psi_1(x)=\left(\varphi(x)\frac{f(x)}{f_h(x)^2}\right)_h ,
\end{align*}
we obtain
\begin{align}\label{decomplem1}
n^{1/2} \left(\w I_{cor}(\varphi)
 -\int \varphi(x)  dx \right)=\w R_0+\w R_1+\w R_2+\w R_3+\w R_4+\w R_5
\end{align}
with (we underbrace terms which have been deliberately introduced and removed)
\begin{align*}
&\w R_0 = n^{-1/2} \sum_{i=1}^n \psi_1(X_i)-\psi_{2}(X_i) {\w f_i }  
+\underbrace{\tilde\psi_1(X_i)} -\underbrace{\E[\psi_1(X_i)]}\\
&\w R_1 = \int \Big(\underbrace{f(x)f_h(x)^{-1}}-1\Big)\varphi(x) \,dx\\
&\w R_2= n^{-1/2} \sum_{i=1}^n  \psi_1(X_i)-\underbrace{\tilde\psi_1(X_i)} \\
&\w R_3 = n^{-1/2} \sum_{i=1}^n  \psi_3(X_i)\{ ( f_h(X_i)-\w f_i )^2-\underbrace{\w v_i}\} \\
&\w R_4 =  n^{-1/2}\sum_{i=1}^n \frac{\psi_3(X_i) 
\w v_i}{\w f_i ^3 }(\,\underbrace{\w f_i ^3}- f_h(X_i)^3)\\
&\w R_5 = n^{-1/2} \sum_{i=1}^n\psi_3(X_i) \frac {(f_h(X_i)-\w f_i )^3} {\w f_i }.
\end{align*}
$\w v_i$ appears to be a centering term in $\w R_3$.
We shall now compute bounds for each term separately. 
Since some of these bound will be used for the proof of (\ref{borne1})
we shall use only the property
\begin{align*}
h^{s} + n^{1/2}h^{r} +  n^{-1/2}h^{-d}\rightarrow 0.
\end{align*}

\paragraph{Step 1\,:}$\|\w R_0\|_2=O(n^{-1/2} h^{-d/2})$.
Remark that 
\begin{align*}
\w R_1  &= n^{-1/2} (n-1)^{-1}\sum_{i\ne j}\E[u_{ij}|X_j]-u_{ij}+E[u_{ij}|X_i]-E[u_{ij}],
\end{align*}
with $u_{ij}=\psi_{2}(X_i)K_{ij}$, is a degenerate $U-$statistic. The $n(n-1)$ terms in the sum are all orthogonal with $L_2$ norm smaller than $\|u_{ij}\|_2$, hence
\begin{align*}
(n-1)E[\w R_1^2]\le &  \E[u_{12}^2]
\le \|\psi_2\|_\infty^2E[K_{12}^2]
\end{align*}
and
\begin{align}
\E[K_{12}^2|X_1]\le &h^{-2d}\int K(h^{-1}(x-X_1))^2f(x)dx 
\le h^{-d}\|f\|_\infty \int K(u)^2du.\label{K2}
\end{align}

\paragraph{Step 2\,:}$\w R_1= O( n^{1/2}h^r)$. This classically results from Equation~(\ref{bochner2}) of Lemma~\ref{bochner},
and from Assumption (B\ref{ash3}).

\paragraph{Step 3\,:}$\|\w R_2\|_2= O( n^{1/2}h^r+h^{{s}})$. We can rearrange the function $\psi_1(x)-\tilde\psi_1$
as 
\begin{align*}
\psi_1(x)-\tilde\psi_1(x)= &\Big(\psi_1(x)-\psi_{1h}(x)\Big)+\Big(\psi_{1h}(x)-\tilde\psi_1(x)\Big)
\end{align*}
(with the notation (\ref{noth})) and since
\begin{align*}
\|\psi_{1h}(x)-\tilde\psi_1(x)\|_\infty
=& \|\Big(\psi_1(x)-\varphi(x)\frac{f(x)}{f_h(x)^2}\Big)_n\|_\infty\\
\le& \|\psi_1(x)-\varphi(x)\frac{f(x)}{f_h(x)^2}\|_\infty\\
=& \Big\|\frac{\varphi}{f_h^2}(f_h-f)\Big\|_\infty
\end{align*}
we have
\begin{align*}
\w R_2\le n^{-1/2}\Big|\sum_{i=1}^n\psi_{1h}(X_i)-\psi_{1}(X_i)\Big|
+n^{1/2}\Big\|\frac{\varphi}{f_h^2}\Big\|_\infty\,\| f_h-f\|_\infty
\end{align*}
and we conclude with Equations~(\ref{bochner1}) and (\ref{bochner2}) of Lemma~\ref{bochner}.

\paragraph{Step 4\,:}$\|\w R_3\|_2=O(n^{-1/2}h^{-d/2})$.
We first rewrite separately each term. Set
\begin{align*}
&U_i=   (f_h(X_i)-\w f_i )^2-\w v_i, 
\end{align*}
and rewrite $\w R_3$ as
\begin{align*}
&\w R_3 = n^{-1/2} \sum_{i=1}^n\psi_3(X_i) U_i.
\end{align*}
Consider a sequence of real numbers $(x_j)_{1\le j\le p}$ and set
\begin{align*}
&m=\frac{1}{p}\sum_{j=1}^px_j\\
&v=\frac{1}{p(p-1)}\sum_{j=1}^p(x_j-m)^2=\frac{1}{p(p-1)}\sum_{j=1}^p(x_j^2-m^2),
\end{align*}
then 
\begin{align*}
m^2-v=\Big(1+\frac{1}{p-1}\Big)m^2-\frac{1}{p(p-1)}\sum_{j=1}^px_j^2=\frac{2}{p(p-1)}\sum_{j< k}x_jx_k.
\end{align*}
Applying this with $x_j=K_{ij}-f_h(X_i)$ ($i$ is fixed) and $p=n-1$ we get
\begin{align*}
U_i=&\frac2{(n-1)(n-2)}\sum_{j\ne i,k\ne i,j< k}(K_{ij}-f_h(X_i))(K_{ik}-f_h(X_i))\\
=&\frac2{(n-1)(n-2)}\sum_{j< k}\xi_{ij}\xi_{ik},
\end{align*}
with 
\begin{align*}
&\xi_{ij}=K_{ij}-f_h(X_i)\\
&\xi_{ii}=0.
\end{align*}
Then
\begin{align*}
&\w R_3 =  \frac{2n^{-1/2}}{(n-1)(n-2)}\sum_i\sum_{j< k}\psi_3(X_i)\xi_{ij}\xi_{ik}.
\end{align*}
We are going to calculate $\E[\w R_3^2]$ by using the Efron-Stein inequality (Theorem~\ref{efronstein}) 
and the moment inequalities  (\ref{eq1}) to (\ref{eq3}) for 
$\xi_{ij}$ stated in Lemma \ref{moments}; in particular, by (\ref{eq1}) $\E[\w R_3^2]=Var(\w R_3)$. 
Consider $\w R_3=f(X_1,\dots X_n)$ as a function of the $X_i$'s and define
\begin{align*}
&\w R'_3 = f(X'_1,X_2\dots X_n)\\
 &\xi'_{1j}=h^{-d}K(h^{-1}(X'_1-X_i))-f_h(X_1)\\
&\xi'_{i1}=h^{-d}K(h^{-1}(X'_1-X_i))-f_h(X_i)\\
&\xi'_{ij}=\xi_{ij}\t{ if }i\ne 1 \t{ and }j\ne 1
\end{align*}
where $X'_1$ is a copy of $X_1$ independent from the sample $(X_1,\ldots,X_n)$. Then by the Efron-Stein inequality and the triangular inequality
\begin{align*}
\|\w R_3\|_2\le& \left( \frac {n }{ 2} \right)^{1/2} \|\w R_3-\w R'_3\|_2 \\
= &Cn^{-2}\left\| \sum_{j< k}(\psi_3(X_1)\xi_{1j}\xi_{1k}-\psi_3(X'_1)\xi_{1j}'\xi_{1k}')
+\sum_i\sum_{1< k}\psi_3(X_i)(\xi_{i1}-\xi'_{i1})\xi_{ik}\right\|\\
\le &Cn^{-2}\left(\left\| \sum_{j< k}\psi_3(X_1)\xi_{1j}\xi_{1k}-\psi_3(X'_1)\xi_{1j}'\xi_{1k}'\right\|
+\left\|\sum_{1< k}\sum_i\psi_3(X_i)(\xi_{i1}-\xi'_{i1})\xi_{ik}\right\|\right)\\
= & Cn^{-2}(\|T_1\|_2+\|T_2\|_2).
\end{align*}
Remember that $\xi_{ii}=0$. Noting that the terms in the first sum are orthogonal (by independence of $\xi_{ij}$ and $\xi_{ik}$ conditionally to $X_i$ and (\ref{eq1})) we obtain
\begin{align*}
\|T_1\|_2= &  \sqrt{\tfrac{(n-1)(n-2)}{2}} \|\psi_3(X_1)\xi_{12}\xi_{13}-\psi_3(X'_1)\xi_{12}'\xi_{13}'\|_2\\
\le& \sqrt2\,n \|\psi_3\|_{\infty}\|\xi_{12}\xi_{13}\|_2\\
=&\sqrt 2\,n \|\psi_3\|_{\infty}\E[\E[\xi_{12}^2\xi_{13}^2|X_1]]^{1/2}\\
= & \sqrt2\,n\|\psi_3\|_{\infty}\|\E[\xi_{12}^2|X_1] \|_2\\
\leq& Cnh^{-d}
\end{align*}
by (\ref{eq2}). Because the terms of the second sum
are orthogonal whenever the values of $k$ are different, we get
\begin{align*}
\|T_2\|_2=& (n-1)^{1/2}\Big\|\sum_i\psi_3(X_i)(\xi_{i1}-\xi'_{i1})\xi_{i2}\Big\|.
\end{align*}
By first developing and then using that $X_1'$ is an independent copy of $X_1$, we obtain 
\begin{align*}
 \Big\|\sum_i\psi_3(X_i)(\xi_{i1}-\xi'_{i1})\xi_{i2}\Big\|_2^2 
 \le& n \E\left[\psi_3(X_3)^2(\xi_{31}-\xi'_{31})^2\xi_{32}^2\right]|\\
 &+n^2|\E\left[\psi_3(X_3)\psi_3(X_4)(\xi_{31}-\xi'_{31})\xi_{32}(\xi_{41}-\xi'_{41})\xi_{42}\right]|\\
 \leq& nC \E\left[(\xi_{31}-\xi'_{31})^2\xi_{32}^2\right]\\
 &+n^2 C'\E\left[|\E[(\xi_{31}-\xi'_{31})\xi_{32}(\xi_{41}-\xi'_{41})\xi_{42}|X_3,X_4]|\right]\\
= & 2C n\E\left[\xi_{31}^2\xi_{32}^2\right]+2Cn^2 \E\left[|\E[
\xi_{31}\xi_{32}\xi_{41}\xi_{42}|X_3,X_4]|\right].
\end{align*}
Then by (\ref{eq2}) $\E\left[\xi_{31}^2\xi_{32}^2\right]=\E\left[\E[\xi_{31}^2|X_3]^2\right]\le Ch^{-2d}$ and by (\ref{eq3})
\begin{align*}
\E[|\E[\xi_{31}\xi_{32}\xi_{41}\xi_{42}|X_3,X_4]|]=&\E[\E[\xi_{31}\xi_{41}|X_3,X_4]^2]\\ \le& 2\|f\|_\infty^2 h^{-2d}\E[K_2(h^{-1}(X_4-X_3))^2]+2\|f\|_{\infty}^4\\
\le &2\|f\|_\infty^3 h^{-d}\int K_2(u)^2d u+2\|f\|_{\infty}^4.
\end{align*}
Bringing everything together
\begin{align*}
&\|\w R_3\|_2\leq C n^{-1}h^{-d}+C n^{-1}h^{-d}+C n^{-1/2}h^{-d/2}= O(n^{-1/2}h^{-d/2})
\end{align*}
because $nh^d\r\infty$.

\paragraph{Step 5\,:}$\w R_4=O_{\P}(n^{-1} h^{-3d/2})$. 
We start with a lower bound for $\w f_i$ by proving the existence of $N(\omega)$ such that
\begin{align}
\forall\,n\ge N(\omega),~\forall\, i,~~\frac{b}{2}<\w f_i<2\|f\|_\infty.\label{unifb}
\end{align}
Notice that
\begin{align*}
&\w f_i=\frac{n}{n-1}\left(\w f(X_i)-\frac{h^{-d}}{n-1}K(0)\right)\\
&\w f(x)=\frac1{nh^d}\sum_{k=1}^nK(h^{-d}(x-X_k)),
\end{align*}
due to the almost sure uniform convergence of $\w f$ to $f$ (Theorem~1 in \cite{devroye1980})
we have for $n$ large enough 
\begin{align*}
\frac{2b}{3}<\inf_{x\in Q} \w f (x) \le
\sup_{x\in Q} \w f (x)  < \frac{3}{2}\|f\|_\infty 
\end{align*}
and since assumption $nh^d\rightarrow\infty$, (\ref{unifb}) follows.
We can now compute the expectation of $\w R_4$ restricted to $\{n\ge N(\omega)\}$. Because
\begin{align*}
|\w R_4|1_{n>N(\omega)} \le&C   n^{-1/2}\sum_{i=1}^n |\w f_i- f_h(X_i)|\w v_i
\end{align*}
we have by the Cauchy-Schwartz inequality
\begin{align}
\E[|\w R_4|1_{n>N(\omega)}] \le &C   n^{1/2}\E[(\w f_1- f_h(X_1))^2]^{1/2}\E[\w v_1^2]^{1/2}.\label{st50}
\end{align}
Applying the fact that for any real number $a$, $ \frac 1 p  \sum_{j=1}^p(x_j-\overline x)^2\leq \frac 1 p  \sum_{i=1}^p(x_j-a)^2$ to $x_j=K_{1j}$, $p=n-1$ and $a=f_h(X_1)$, we obtain that
\begin{align*}
&\w v_1 \le \frac1{(n-1)(n-2)} \sum_{j=2}^n \xi_{1j}^2, 
\end{align*}
then using (\ref{eq2})
\begin{align}\nonumber
\E[\w v_1^2] 
=& (n-1)^{-1}(n-2)^{-2}\E[\xi_{12}^4]+(n-1)^{-1}(n-2)^{-1}\E[\xi_{12}^2\xi_{13}^2] \\\nonumber
\le& C'n^{-3}h^{-3d}+C'n^{-2}h^{-2d}\\
\le& C''n^{-2}h^{-2d}\label{st51}
\end{align}
because $nh^d$ is lower bounded. On the other hand using (\ref{eq2})
\begin{align}\label{st52}
\E[(\w f_1- f_h(X_1))^2]=&\frac1{n-1}\E[\xi_{1i}^2]\le Cn^{-1}h^{-d}.
\end{align}
Putting together (\ref{st50}), (\ref{st51}) and (\ref{st52}),
\begin{align*}
\E[|\w R_4|1_{n>N(\omega})] \le &C   n^{1/2}n^{-1}h^{-d}n^{-1/2}h^{-d/2}=C n^{-1}h^{-3d/2}.
\end{align*}
In particular
\begin{align*}
\P(n h^{3d/2}|\w R_4|>A)
\le&\P(n h^{3d/2}|\w R_4|1_{n>N(\omega)}>A)+\P(n\le N(\omega))\\
\le&  C\,A^{-1}+\P(n\le N(\omega)).
\end{align*}
This proves the boundedness in probability of $n h^{3d/2}|\w R_4|$.

\paragraph{Step 6\,:}$\w R_5= O_\P( n^{-1}h^{-3d/2}+n^{-3/2} h^{-2d})$.
Following (\ref{unifb}) since
\begin{align*}
|\w R_5|1_{n>N(\omega)} \leq 2b^{-3}\|\varphi\|_\infty n^{-1/2} \sum_{i=1}^n{|\w f_i -f_h(X_i)|^3},
\end{align*}
we can show the convergence in probability of the right-hand side term as in Step 5. We have
indeed by the Rosenthal's inequality\footnote{\label{foot}For a martingale $(S_i,\mathcal{F}_i)_{i\in \N}$ and $2\leq p< +\infty$, we have $\E[|S_n|^p]\leq C\{\E[(\sum_{i=1}^n\E[X_i^2|\mathcal{F}_{i-1}])^{p/2}]+\sum_{i=1}^n \E|X_i|^p\}$, where $X_i = S_i -S_{i-1}$ (see for instance \cite{hall1980}, p. 23-24).}
\begin{align}\nonumber
\E\left[n^{-1/2} \sum_{i=1}^n|\w f_i -f_h(X_i)|^p\right]=&n^{1/2}(n-1)^{-p}\E[|\sum_{i=2}^n \xi_{1i} |^p]\\
&\leq  Cn^{1/2}n^{-p}\{(n\E[\xi_{12}^2])^{p/2}+n\E[|\xi_{12}|^p]\}\nonumber\\
&\leq  C'\{n^{(1-p)/2}h^{-pd/2}+n^{3/2-p}h^{-(p-1)d}\}\label{rose}.
\end{align}
(cf. (\ref{eq2})). Hence with $p=3$
\begin{align*}
\E\left[|\w R_5|1_{n>N(\omega)}\right] \leq C\{n^{-1}h^{-3d/2}+n^{-3/2}h^{-2d}\}
\end{align*}
and we conclude as in Step 5.

\noindent{\textbf{Proof of (\ref{borne2}):}}  Putting together the steps 1 to 6, and taking into account, concerning $\w R_5$, 
that $n^{-3/2} h^{-2d}=(n^{-1/2} h^{-d/2})(n^{-1} h^{-3d/2})$, we obtain (\ref{borne2}). 

For (\ref{borne1}), we use a shorter expansion
which leads to an actually much simpler proof:
\begin{align*}
\frac {1}{\w f_i } =  \frac{1} {f_h(X_i)} 
+ \frac {f_h(X_i) -\w f_i } {f_h(X_i)^2}
+\frac {(f_h(X_i) -\w f_i )^2} {\w f_i f_h(X_i)^2}
\end{align*}
and
\begin{align*}
r= n^{-1/2} \Big(\sum_{i=1}^n \frac{\varphi(X_i)}{\w f_i} -\int \varphi(x) dx \Big) 
=\w R_0+\w R_1+\w R_2+\w R'_5
\end{align*}
with 
\begin{align*}
&\psi_q(x)=\frac{\varphi(x)}{f_h(x)^q},~~~q\in\mathbb N\\
&\w R_0 = \int \Big(\underbrace{f(x)f_h(x)^{-1}}-1\Big)\varphi(x) \,dx\\
&\w R_1 = n^{-1/2} \sum_{i=1}^n \psi_1(X_i)-\psi_{2}(X_i) {\w f_i }  
+\underbrace{\tilde\psi_1(X_i)} -\underbrace{\E[\psi_1(X_i)]},~~~\tilde\psi_1(x)=\left(\varphi(x)\frac{f(x)}{f_h(x)^2}\right)_h\\
&\w R_2= n^{-1/2} \sum_{i=1}^n  \psi_1(X_i)-\underbrace{\tilde\psi_1(X_i)} \\
&\w R'_5 = n^{-1/2} \sum_{i=1}^n\psi_2(X_i) \frac {(f_h(X_i)-\w f_i )^2} {\w f_i }.
\end{align*}
The term $\w R'_5$ is bounded exactly as $\w R_5$ but since now we use (\ref{rose}) with 
$p=2$ instead of $p=3$, we obtain
\begin{align*}
\E[|\w R'_5|1_{n>N(\omega)}) \leq C n^{1/2} \E[|f_h(X_1)-\w f_1 |^2]\le Cn^{-1/2} h^{-d}
\end{align*}
and we get $|\w R_5'|=O_\P(n^{-1/2} h^{-d})$.
\qed

\subsection{Proof of the Theorem \ref{thelemma2}}
By decomposition (\ref{decomp}), we are interested in the asymptotic law of the vector 
\begin{align*}
 n^{-1/2} \sum_{i=1}^n \frac{\sigma(X_i) \psi(X_i)}{\w f_i} e_i + \ n^{-1/2} \left(\sum_{i=1}^n \frac{g(X_i) \psi(X_i)}{\w f_i}-\int g(x) \psi(x)d x\right).
\end{align*}
By Lemma \ref{thelemma}, the right hand-side term goes to $0$ in probability. For the other term, we use the decomposition $\w S_1 + \w S_2$, with 
\begin{align}\label{decomplem2}
\w S_1 = n^{-1/2} \sum_{i=1}^n \frac{s(X_i)}{ f(X_i)} e_i\quad \text{ and}\quad  \w S_2 =n^{-1/2} \sum_{i=1}^n \frac{s(X_i)( f(X_i)-\w f(X_i))}{\w f_i f(X_i)} e_i.
  \end{align}   
where $s(X_i) =\sigma(X_i) \psi(X_i)$. We define $\mathcal F $ as the $\sigma$-field generated by the set of random variables $\{X_1,X_2,\cdots \}$. We get
\begin{align*}
\E[\w S_2^2 |\mathcal F]= n^{-1} \sum_{i=1}^n \frac{s(X_i)^2( f(X_i)-\w f_i)^2}{\w f_i^2 f(X_i)^2},
\end{align*} 
then, one has
\begin{align*}
\E[\w S_2^2 |\mathcal F]\leq (b^2\inf_{i} \w f_i^2)^{-1} \|s\|_{\infty}^2 n^{-1}\sum_{i=1}^n ( f(X_i)-\w f_i)^2.
\end{align*} 
For the term on the left, since $s$ has support $Q$ we can use (\ref{unifb}), that is for $n$ large enough, it is bounded. For the right hand-side term, it follows that
\begin{align*}
 n^{-1}\sum_{i=1}^n ( f(X_i)-\w f_i)^2\leq 2(n^{-1}\sum_{i=1}^n ( f(X_i)-f_h(X_i))^2+n^{-1}\sum_{i=1}^n (f_h(X_i)-\w f_i)^2),
\end{align*} 
and then using Lemma \ref{bochner} and (\ref{rose}) for $p=2$ we provide the bound
\begin{align}\label{sansnom}
 \|n^{-1}\sum_{i=1}^n ( f(X_i)-\w f_i)^2\|_1\leq C\{h^{2r}+n^{-1}h^{-d}\} .
\end{align}
Therefore, we have shown that 
$\E[\w S_2^2 |\mathcal F]\r 0 $ in probability. Since for any $\epsilon >0$, $\P( |\w S_2|>\epsilon |\mathcal F) \leq \epsilon ^{-2} \E[\w S_2^2 |\mathcal F]$, it remains to note that the sequence $\P(|\w S_2|>\epsilon|\mathcal F)$ is uniformly integrable to apply the Lebesgue domination theorem to get
\begin{align*}
\P(\w S_2>\epsilon)\overset{}{\lr}0.
\end{align*}
To conclude, we apply the CLT to $\w S_1$ and the statement follows.
\qed

\subsection{Somme lemmas}

\begin{lemma}\label{bochner} For any function $g :\R^d \r \R$, we define
\begin{align*} 
 g_h(x) = \int g(x+hu)K(u)du.
\end{align*}
Under Assumptions (B\ref{ash1}) and (B\ref{ash2})  we have
for some constant  $C$
\begin{align}\label{bochner2}
\|f_h - f\|_\infty \le C h^{r},
\end{align}
and for any $\psi\in\EuScript H_{s}$,  $\lfloor{s}\rfloor\le r$ (cf Equation~(\ref{nicol})
and the following remark)
\begin{align}\label{bochner1}
\left\| \sum_{i=1}^n \psi(X_i) -\psi_{h}(X_i)\right\|_2 \le C  n^{1/2}( h^{{s}} + n^{1/2} h^{r})
\end{align}
where $C$ depends on $\psi$ and $f$.
\end{lemma}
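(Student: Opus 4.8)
The plan is to prove the two estimates separately, establishing the uniform bias bound (\ref{bochner2}) first, since the moment bound (\ref{bochner1}) will rely on it.

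For (\ref{bochner2}) I would write $f_h(x)-f(x)=\int (f(x+hu)-f(x))K(u)\,du$ and Taylor expand $f(x+hu)$ about $x$ to order $r$ with Lagrange remainder, which is licit because $f$ has bounded derivatives up to order $r$ by Assumption (A\ref{ash2}). After integrating against $K$, the constant term reproduces $f(x)$ (as $\int K=1$), while every monomial of degree $1\le|l|\le r-1$ is annihilated by the vanishing moments of the order-$r$ kernel (Assumption (A\ref{ash4})). Only the degree-$r$ remainder survives; bounding the $r$-th derivatives uniformly and using $\int|u|^r K(u)\,du<\infty$ (guaranteed by the exponential tail of $K$) yields $|f_h(x)-f(x)|\le Ch^r$ uniformly in $x$.

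For (\ref{bochner1}) I set $Z_i=\psi(X_i)-\psi_h(X_i)$, which are i.i.d., so that
\begin{align*}
\Big\|\sum_{i=1}^n Z_i\Big\|_2^2 = n\,\var(Z_1)+n^2(\E Z_1)^2 \le n\|Z_1\|_2^2+n^2(\E Z_1)^2 ,
\end{align*}
whence $\big\|\sum_i Z_i\big\|_2\le n^{1/2}\|Z_1\|_2+n|\E Z_1|$. It therefore suffices to prove $\|Z_1\|_2\le Ch^s$ and $|\E Z_1|\le Ch^r$, since substituting these gives precisely $Cn^{1/2}(h^s+n^{1/2}h^r)$. The easier half is $|\E Z_1|\le Ch^r$: transferring the convolution onto $f$ via Fubini, the substitution $y=x+hu$, and the symmetry of $K$ (Assumption (A\ref{ash4})) gives $\E[\psi_h(X_1)]=\int\psi(y)f_h(y)\,dy$, so $\E Z_1=\int\psi(y)\big(f(y)-f_h(y)\big)\,dy$; as $\psi$ is bounded with compact support it lies in $L^1$, and (\ref{bochner2}) yields $|\E Z_1|\le\|\psi\|_{1}\|f-f_h\|_\infty\le Ch^r$.

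The heart of the matter is $\|Z_1\|_2\le Ch^s$. Because $f$ is bounded, $\|Z_1\|_2^2=\int(\psi-\psi_h)^2 f\,dx\le\|f\|_\infty\|\psi-\psi_h\|_{L^2(dx)}^2$, so I would bound the Lebesgue $L^2$ norm of $\psi-\psi_h$. Writing $k=\lfloor s\rfloor$ and $\alpha=s-k$, I expand $\psi(x+hu)$ by Taylor's formula with integral remainder to order $k$, and centre the degree-$k$ remainder by subtracting $\psi^{(l)}(x)$; the subtracted pieces combine with the explicit lower-order monomials into terms of degree $1\le|l|\le k$, all of which vanish after integration against $K$ because $k\le r-1$ under the standing hypotheses (recall $\lfloor s\rfloor\le r$ with the convention $\lfloor n\rfloor=n-1$, and $K$ has order $r$). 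What remains is the centred remainder, carried by the increments $\psi^{(l)}(x+thu)-\psi^{(l)}(x)$ with $|l|=k$. Taking the $L^2(dx)$ norm and pushing it inside the $u$- and $t$-integrals by Minkowski's integral inequality, I control each increment with the Nikol'ski condition (\ref{nicol}), $\|\psi^{(l)}(\cdot+thu)-\psi^{(l)}(\cdot)\|_2\le C|thu|^\alpha$; integrating $t^\alpha(1-t)^{k-1}$ against $dt$ and $|u|^{k+\alpha}K(u)$ against $du$ (finite by the kernel tail) leaves $Ch^{k+\alpha}=Ch^s$. Combining the two bounds in the displayed inequality then completes the proof. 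This multi-index Taylor/Minkowski bookkeeping, and especially extracting exactly the $h^s$ scaling from the order-$k$ increments via (\ref{nicol}), is the step I expect to be the most delicate.
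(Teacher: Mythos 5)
Your proposal is correct and follows essentially the same route as the paper: split the squared $L^2$ norm into $n\var(Z_1)+n^2(\E Z_1)^2$, bound the mean by transferring the convolution onto $f$ (via Fubini and the symmetry of $K$) and invoking $\|f_h-f\|_\infty\le Ch^r$, and bound $\|Z_1\|_2$ by a Taylor expansion to order $k=\lfloor s\rfloor$ whose polynomial part is killed by the kernel's vanishing moments, finishing with the generalized Minkowski inequality and the Nikol'ski condition to extract $h^{k+\alpha}=h^s$. The only (immaterial) difference is that you prove (\ref{bochner2}) first and cite it for the mean term, whereas the paper proves it last; the substance is identical.
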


\begin{proof} We split mean and variance:
\begin{align*}
\E[( \sum_{i=1}^n \psi(X_i) -\psi_{h}(X_i))^2]
=&(n\E[\psi(X_1) -\psi_{h}(X_1)])^2+nVar(\psi(X_1) -\psi_{h}(X_1)).
\end{align*}
For the mean:
\begin{align*}
\E[\psi(X_1) -\psi_{h}(X_1)]
&=\int\left(\psi(x)-\psi_h(x)\right)f(x) dx\\
&=\int\psi(x)f(x)-\psi(x) f_h(x) dx\\
&=\int\psi(x)(f(x)-f_h(x)) dx \\
|\E[\psi(X_1) -\psi_{h}(X_1)]|&\le Ch^r\|\psi\|_\infty
\end{align*}
and for the variance
\begin{align}\label{dfg}
\E[(\psi_h(X_1) -\psi(X_1))^2]
=& \int\left(\int\left(\psi(x+hu)-\psi(x)\right)K(u)du \right)^2f(x)  dx.
\end{align}
By the Taylor formula with Lagrange remainder applied to $g(t)=\psi(x+tu)$ with 
$k=\lfloor{s}\rfloor$:
\begin{align*}
\psi(x+hu)=&\sum_{j=0}^{k-1}\frac{h^j}{j!}g^{(j)}(0)+
\int_0^{h}g^{(k)}(t)\frac{(h-t)^{k-1}}{(n-1)!}dt\\
=&\sum_{j=0}^k\frac{h^j}{j!}g^{(j)}(0)+
\int_0^{h}(g^{(k)}(t)-g^{(k)}(0))\frac{(h-t)^{k-1}}{(n-1)!}dt.
\end{align*}
The first term is $\psi(x)$ plus a polynomial in $u$ which will vanish
after insertion in (\ref{dfg}) because $K$ is orthogonal the first non-constant polynomial of degree
$\le r$. The second term is bounded as
 \begin{align*}
|\int_0^{h}(g^{(k)}(t)-g^{(k)}(0))\frac{(h-t)^{k-1}}{(k-1)!}dt|
\le C|u|^kh^{k-1}\int_0^{h}\|\psi^{(k)}(x+tu)-\psi^{(k)}(x)\|dt.
\end{align*}
Hence
\begin{align}\label{taylor}
|\int\left(\psi(x+hu)-\psi(x)\right)K(u)du|
\le Ch^{k-1}\int_0^{h}\int\|\psi^{(k)}(x+tu)-\psi^{(k)}(x)\||u|^k K(u)du\,dt
\end{align}
and by the generalized Minkowski inequality \cite{tsybakov2009}\footnote
{For any nonegative function $g(.,.)$ on $\mathbb R^{k+p}$,
\begin{align*}
\left(\int\left(\int g(y,x)dy\right)^2 dx\right)^{1/2}\le& \int\left(\int g(y,x)^2dx\right)^{1/2} dy
\end{align*}
}
\begin{align*}
\|\psi_h(X_1) -\psi(X_1)\|_2
\le& Ch^{k-1}\int\left(\int\|\psi^{(k)}(x+tu)-\psi^{(k)}(x)\|^2u^{2k}K(u)^2 1_{0\le t\le h}f(x)dx\right)^{1/2}  dudt\\
\le & C'h^{k-1}\int\left(|tu|^{2\alpha} |u|^{2k}K(u)^2 \right)^{1/2} 1_{0\le t\le h} dudt\\
\le & C'h^{k+\alpha}.
\end{align*}
This proves (\ref{bochner1}). Concerning (\ref{bochner2}), we use (\ref{taylor}) with $f$ and $k=r$:
\begin{align*}
|f_h(x)-f(x)|
\le& Ch^{r-1}\int_0^{h}\int\|f^{(r)}(x+tu)\||u|^r K(u)du\,dt\\
\le& C''h^{r} \int|u|^{r} K(u)du\,ds
\end{align*}

\end{proof}

\begin{theorem}\label{efronstein}(Efron-Stein inequality)
Let $X_1,\dots X_n$ be an i.i.d. sequence, $X_1'$ be an independent copy of $X_1$ 
and $f$ be a symmetric function of $n$ variables, then
\begin{align*}
Var(f(X_1,\dots X_n))\le \frac n 2\E[(f(X_1,\dots X_n)-f(X'_1,X_2,\dots X_n))^2].
\end{align*}
\end{theorem}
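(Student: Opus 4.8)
The plan is to prove the inequality through the standard Doob martingale decomposition of $Z := f(X_1,\dots,X_n)$, combined with a single conditional Jensen step. Let $\E_i[\cdot] := \E[\cdot \mid X_1,\dots,X_i]$ denote conditional expectation given the first $i$ coordinates, and set $\Delta_i := \E_i[Z]-\E_{i-1}[Z]$. Since $\E_0[Z]=\E[Z]$ and $\E_n[Z]=Z$, we have the telescoping identity $Z-\E[Z]=\sum_{i=1}^n\Delta_i$. The $\Delta_i$ are martingale differences, hence pairwise orthogonal in $L_2$ (for $i<j$ one conditions on the first $j$ coordinates), so the first step gives the exact variance decomposition
\begin{align*}
\var(Z)=\sum_{i=1}^n\E[\Delta_i^2].
\end{align*}

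Next I would bound each $\E[\Delta_i^2]$ by a one-coordinate conditional variance. Writing $Z_i:=\E[Z\mid X_1,\dots,X_{i-1},X_{i+1},\dots,X_n]$ for the conditional expectation that integrates out only the coordinate $X_i$, the key observation is twofold: since $Z_i$ does not depend on $X_i$ we have $\E_i[Z_i]=\E_{i-1}[Z_i]$, and by the tower property $\E_{i-1}[Z_i]=\E_{i-1}[Z]$ because the conditioning $\sigma$-field defining $Z_i$ contains $(X_1,\dots,X_{i-1})$. Combining these rewrites the martingale increment as $\Delta_i=\E_i[Z-Z_i]$, whereupon a conditional Jensen inequality yields $\Delta_i^2\le\E_i[(Z-Z_i)^2]$. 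Taking expectations and using that, by independence, $Z_i$ is exactly the expectation of $Z$ over $X_i$ with the other coordinates frozen, the quantity $\E[(Z-Z_i)^2]$ is recognized as $\E[\var^{(i)}(Z)]$, where $\var^{(i)}$ denotes the variance taken in the single variable $X_i$. Hence $\E[\Delta_i^2]\le\E[\var^{(i)}(Z)]$.

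Finally I would use the i.i.d. and symmetry hypotheses to collapse the sum. Summing the previous bound gives $\var(Z)\le\sum_{i=1}^n\E[\var^{(i)}(Z)]$. Because $f$ is symmetric and the $X_j$ are i.i.d., every summand equals $\E[\var^{(1)}(Z)]$, so the right-hand side is $n\,\E[\var^{(1)}(Z)]$. To convert this conditional variance into the stated symmetric difference I would invoke the elementary identity that for a random variable $W$ with an independent copy $W'$ one has $\var(W)=\tfrac12\E[(W-W')^2]$; applied conditionally on $(X_2,\dots,X_n)$ with $W=f(X_1,\dots,X_n)$ and $W'=f(X_1',X_2,\dots,X_n)$ this gives $\E[\var^{(1)}(Z)]=\tfrac12\E[(f(X_1,\dots,X_n)-f(X_1',X_2,\dots,X_n))^2]$, and the theorem follows.

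The routine parts are the orthogonality of the martingale differences and the conditional-variance identity in the last paragraph; the only genuinely delicate point, which I would state carefully, is the identity $\Delta_i=\E_i[Z-Z_i]$, equivalently the verification that $\E_i[Z_i]=\E_{i-1}[Z]$. This is where the independence of the coordinates enters and is precisely what makes the one-coordinate Jensen bound available, so I would present it as the crux of the argument while treating the remaining manipulations as bookkeeping.
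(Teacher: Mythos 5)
Your proof is correct, but there is nothing in the paper to compare it against: the paper states this result as the classical Efron--Stein inequality and uses it as a black box (in Step 4 of the proof of Theorem 1) without providing any proof. What you have written is the standard martingale argument for it: the Doob decomposition $Z-\E[Z]=\sum_{i}\Delta_i$ with $\Delta_i=\E_i[Z]-\E_{i-1}[Z]$, orthogonality of the increments, the rewriting $\Delta_i=\E_i[Z-Z_i]$ (you correctly identify this as the crux --- it needs both the tower property, since $\sigma(X_1,\dots,X_{i-1})$ is contained in the $\sigma$-field defining $Z_i$, and independence, so that conditioning additionally on $X_i$ does not change $\E[Z_i\mid X_1,\dots,X_{i-1}]$), conditional Jensen, and finally the identity $\var(W)=\tfrac12\E[(W-W')^2]$ applied conditionally on $(X_2,\dots,X_n)$, with symmetry of $f$ and exchangeability of the i.i.d.\ sample collapsing the $n$ terms $\E[\var^{(i)}(Z)]$ into $n\,\E[\var^{(1)}(Z)]$. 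All of these steps are sound; the only hypothesis you use implicitly is $f(X_1,\dots,X_n)\in L_2$, which is harmless since otherwise the right-hand side is infinite (or the variance undefined) and the statement is vacuous. Incidentally, your argument proves the stronger, non-symmetric form $\var(Z)\le\sum_{i=1}^n\E[\var^{(i)}(Z)]$, from which the paper's statement is the specialization to symmetric $f$; that general form is exactly what one finds in standard references, so your proposal fills the gap the paper leaves open in precisely the expected way.
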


\begin{theorem}\label{holderremark}
If the support of $\varphi$ is a bounded convex set and 
$\varphi$ is ${\alpha}$-H\"older inside its support then $\varphi\in\EuScript H_{\min(\alpha,1/2)}$.
\end{theorem}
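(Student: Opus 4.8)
The plan is to unwind the definition (\ref{nicol}) for the target regularity $s=\min(\alpha,1/2)$. Since $\alpha\le 1$ we have $s\le 1/2<1$, so with the paper's convention $k=\lfloor s\rfloor=0$, and the Nikol'ski condition collapses to the single inequality
\begin{align*}
\int_{\R^d}(\varphi(x+u)-\varphi(x))^2\,dx\le C|u|^{2s}\qquad\text{for all }u\in\R^d .
\end{align*}
Because $\varphi$ is $\alpha$-H\"older on the bounded set $Q$ and vanishes outside, it is bounded, say by $M$; write $L$ for its H\"older constant.

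First I would split the integration domain according to the position of $x$ and $x+u$ relative to $Q$. Setting $A=Q$ and $B=Q-u=\{x:x+u\in Q\}$, the space $\R^d$ decomposes into $A\cap B$, the region where both $x$ and $x+u$ lie outside $Q$, and the symmetric difference $A\triangle B$. On the middle region the integrand vanishes. On $A\cap B$ both points lie in $Q$, so the H\"older bound gives $|\varphi(x+u)-\varphi(x)|\le L|u|^\alpha$ and the contribution is at most $L^2|Q|\,|u|^{2\alpha}$. On $A\triangle B$ exactly one of the two points is in $Q$, hence the integrand is at most $M^2$ and the contribution is at most $M^2\,|Q\triangle(Q-u)|$.

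The key step, and the only place where convexity is used, is the geometric estimate $|Q\triangle(Q-u)|\le C|u|$, which I would prove by slicing. Writing $u=|u|e$ with $e$ a unit vector and decomposing $\R^d=\R e\oplus e^\perp$, convexity of $Q$ forces every fiber $\{y+te:t\in\R\}$, $y\in e^\perp$, to meet $Q$ in an interval of length $\ell(y)$. The translate $Q-u$ meets the same fiber in that interval shifted by $|u|$, and an elementary one-dimensional computation shows that the symmetric difference of an interval with its shift by $|u|$ has measure $2\min(|u|,\ell(y))\le 2|u|$. Integrating over the projection of $Q$ onto $e^\perp$ gives
\begin{align*}
|Q\triangle(Q-u)|\le 2|u|\,\big|\pi_{e^\perp}(Q)\big|\le C|u|,
\end{align*}
where the bound is uniform in the direction $e$ because $Q$, being bounded, has all its $(d-1)$-dimensional projections inside a fixed ball, so $\big|\pi_{e^\perp}(Q)\big|$ is bounded by a constant independent of $e$.

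Combining the two contributions yields $\int(\varphi(x+u)-\varphi(x))^2dx\le L^2|Q|\,|u|^{2\alpha}+M^2C\,|u|$. For $|u|\le 1$ both exponents are at least $2s$ (since $s\le\alpha$ and $s\le 1/2$), so each term is dominated by a constant times $|u|^{2s}$; for $|u|>1$ the left-hand side is at most $4\|\varphi\|_2^2\le 4M^2|Q|$ while $|u|^{2s}>1$, so the inequality persists after enlarging $C$. This gives the required bound and hence $\varphi\in\EuScript H_{\min(\alpha,1/2)}$. I expect the convexity-based measure estimate for $Q\triangle(Q-u)$ to be the only substantive point; the H\"older and bookkeeping steps are routine.
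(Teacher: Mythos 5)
Your proof is correct, and it coincides with the paper's only in the first step: like the paper, you split $\int(\varphi(x+u)-\varphi(x))^2dx$ into the region where both $x$ and $x+u$ lie in $Q$ (handled by the H\"older bound, giving $C|u|^{2\alpha}$) and the region where exactly one of them lies in $Q$ (handled by $\|\varphi\|_\infty^2$ times a measure estimate). The genuine difference is in how that measure is controlled. The paper notes that every point of the second region is within distance $|u|$ of $\partial Q$ and bounds $\lambda\big(\{y:\dist(y,\partial Q)<|u|\}\big)\le C|u|$ by invoking Steiner's formula for convex bodies (Quermassintegrals, citing Federer). You instead bound the symmetric difference $|Q\triangle(Q-u)|$ directly by a Fubini/slicing argument: convexity forces every line parallel to $u$ to meet $Q$ in an interval, the symmetric difference of an interval with its shift by $|u|$ has measure $2\min(|u|,\ell)\le 2|u|$, and integrating over the projection $\pi_{e^\perp}(Q)$ — whose $(d-1)$-dimensional measure is bounded uniformly in the direction $e$ because $Q$ sits in a fixed ball — yields $|Q\triangle(Q-u)|\le C|u|$. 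Your route buys self-containedness (nothing beyond Fubini and one-dimensional convexity, no integral geometry) and estimates the smaller set, the symmetric difference, rather than the full tubular neighborhood of $\partial Q$; the paper's route is shorter on the page but outsources the key inequality to an external theorem. You also treat $|u|>1$ explicitly, a small completeness point (the condition (\ref{nicol}) is stated for all $u$) that the paper leaves implicit; both arguments are otherwise sound.
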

\begin{proof} We have
\begin{align*}
\int |\varphi(x+u)-\varphi(x)|^2 dx 
&\le \|\varphi\|_{\infty}^2 \int (\mathds 1 _ {\{x+u\in Q\}}\mathds 1_{\{ x\notin Q\}}+\mathds 1 _ {\{x+u\notin Q\}}\mathds 1_{\{ x\in Q\}}) dx +C'|u|^{2\alpha} \\ 
&\le \|\varphi\|_{\infty}^2\lambda ( y\, :\,  \emph{dist} (y,\partial Q)< |u| )+C'|u|^{2\alpha } \\
&\le \|\varphi\|_{\infty}^2 \xi_{n-1}(S) |u|+C'|u|^{2\alpha} ,
\end{align*}
where $\xi_{n-1}(S)$ is called a Quermassintegrale of Minkowski. The last inequality follows from the Steiner's formula stated for instance in \cite{federer1969}, Theorem 3.2.35 page 271.
\end{proof}

The following lemma gives some bounds on the conditional moments of $\xi_{12}$ that are useful in the proof of Theorem \ref{thelemma}.
\begin{lemma}\label{moments}
Let $\xi_{ij}=K_{ij} - f_h(X_i)$, under (B\ref{ash1}) and (B\ref{ash2})
\begin{align}
&\E[\xi_{12}|X_1]=0\label{eq1}\\
&\E[|\xi_{12}|^p|X_1]\le C h^{-(p-1)d}\label{eq2}\\
&|\E[\xi_{13}\xi_{23}|X_1,X_2]| \le \|f\|_\infty (h^{-d}K_2(h^{-1}(X_2-X_1))+\|f\|_{\infty})\label{eq3},
\end{align}
with $K_2(x)=\int|K(x-y)K(y)|dy$.
\end{lemma}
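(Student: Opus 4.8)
The plan is to establish the three bounds separately; the common engine is the change of variables $x = X_1 - hu$, which turns integrals against $h^{-d}K(h^{-1}(X_1-\,\cdot\,))$ into integrals against $K$ and simultaneously produces the correct powers of $h$.

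For (\ref{eq1}), I would compute $\E[K_{12}\mid X_1] = \int h^{-d}K(h^{-1}(X_1-x))f(x)\,dx$ and substitute $x = X_1 - hu$ to get $\int K(u)f(X_1-hu)\,du$. By the symmetry of the kernel (Assumption (A\ref{ash4})) this equals $\int K(u)f(X_1+hu)\,du = f_h(X_1)$, with $f_h$ as in (\ref{noth}). Hence $\E[\xi_{12}\mid X_1] = \E[K_{12}\mid X_1] - f_h(X_1) = 0$. For (\ref{eq2}), I would first record the uniform bound $\|f_h\|_\infty \le \|f\|_\infty\int|K|$, finite by the exponential decay of $K$ (Assumption (A\ref{ash4})). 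Using the convexity inequality $|\xi_{12}|^p \le 2^{p-1}(|K_{12}|^p + f_h(X_1)^p)$, the $f_h$ term is $O(1)$, while the same substitution gives $\E[|K_{12}|^p\mid X_1] = h^{-(p-1)d}\int|K(u)|^p f(X_1-hu)\,du \le h^{-(p-1)d}\|f\|_\infty\int|K(u)|^p\,du$. The exponential tail guarantees $\int|K|^p<\infty$ for every $p$, and since $h^{-(p-1)d}$ dominates the $O(1)$ contribution, (\ref{eq2}) follows.

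The substantive part is (\ref{eq3}). Because $X_3$ is independent of $(X_1,X_2)$, the computation in (\ref{eq1}) yields $\E[K_{13}\mid X_1,X_2]=f_h(X_1)$ and $\E[K_{23}\mid X_1,X_2]=f_h(X_2)$; expanding $\xi_{13}\xi_{23}=(K_{13}-f_h(X_1))(K_{23}-f_h(X_2))$ and taking the conditional expectation, the cross terms cancel and leave
\[
\E[\xi_{13}\xi_{23}\mid X_1,X_2] = \E[K_{13}K_{23}\mid X_1,X_2] - f_h(X_1)f_h(X_2).
\]
I would then evaluate $\E[K_{13}K_{23}\mid X_1,X_2]=h^{-2d}\int K(h^{-1}(X_1-x))K(h^{-1}(X_2-x))f(x)\,dx$ and substitute $x = X_1 - hu$: the second kernel becomes $K(u + h^{-1}(X_2-X_1))$, one factor $h^{d}$ is absorbed, and one obtains $h^{-d}\int K(u)K(u+h^{-1}(X_2-X_1))f(X_1-hu)\,du$. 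Bounding $f$ by $\|f\|_\infty$ and recognising, after the substitution $u\mapsto -u$ together with the symmetry of $K$, that $\int|K(u)K(u+t)|\,du = K_2(t)$ with $t=h^{-1}(X_2-X_1)$, gives $|\E[K_{13}K_{23}\mid X_1,X_2]| \le \|f\|_\infty h^{-d}K_2(h^{-1}(X_2-X_1))$. Combining with $|f_h(X_1)f_h(X_2)|\le\|f\|_\infty^2$ through the triangle inequality yields (\ref{eq3}).

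I expect the main obstacle to be (\ref{eq3}): one must justify the cancellation of the cross terms cleanly, choose the change of variables that produces both the convolution structure and the exact factor $h^{-d}$, and match the resulting integral to the definition of $K_2$, which crucially relies on the symmetry of $K$ and the sign flip $u\mapsto -u$. The other two bounds are routine consequences of the same substitution and the integrability furnished by the kernel's exponential tail.
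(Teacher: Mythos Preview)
Your proof is correct and follows essentially the same route as the paper: the same change of variables drives all three bounds, and for (\ref{eq3}) the paper takes the minor shortcut $\E[\xi_{13}\xi_{23}\mid X_1,X_2]=\E[\xi_{13}K_{23}\mid X_1,X_2]$ rather than expanding both factors, but the computation is otherwise identical. One small slip: you record $\|f_h\|_\infty\le\|f\|_\infty\int|K|$ but then use $|f_h(X_1)f_h(X_2)|\le\|f\|_\infty^2$, which drops the factor $(\int|K|)^2$; this is harmless (and the paper is equally casual with this constant), but worth writing consistently.
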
 
\begin{proof}
The first equation is trivial. For the second equation, the triangular inequality and the Jensen inequality provide
\begin{align*}
\E[|\xi_{12}|^p|X_1]^{1/p}\leq 2 \E[|K_{12}|^p|X_1]=2h^{-(p-1)d}\int |K(u)|^pf(X_1+h u) d x ,
\end{align*}
and the third one is derived by 
\begin{align*}
|\E[\xi_{13}\xi_{23}|X_1,X_2]|=&|\E[\xi_{13}K_{23}|X_1,X_2]| \\=&h^{-d}|\int (h^{-d}K(h^{-1}(x-X_1))-f_h(X_1))K(h^{-1}(x-X_2))f(x)dx| \\
=&|\int (h^{-d}K(h^{-1}(X_2-X_1)+u)-f_h(X_1))K(u)f(X_2+hu)dx| \\
\le&\|f\|_\infty (h^{-d}K_2(h^{-1}(X_2-X_1))+\|f\|_{\infty}).
\end{align*}
\end{proof}

\paragraph{Acknowledgement.} The authors would like to thank C\' eline Vial for helpful comments and advices on this article. 
  
\bibliographystyle{plain}
\bibliography{bibnp}

\begin{thebibliography}{10}

\bibitem{caflisch1998}
Russel~E Caflisch.
\newblock Monte carlo and quasi-monte carlo methods.
\newblock {\em Acta numerica}, 1998:1--49, 1998.

\bibitem{chen1999}
Song~Xi Chen.
\newblock Beta kernel estimators for density functions.
\newblock {\em Computational Statistics \& Data Analysis}, 31(2):131--145,
  1999.

\bibitem{cook1991}
R~Dennis Cook and Sanford Weisberg.
\newblock Comment.
\newblock {\em Journal of the American Statistical Association},
  86(414):328--332, 1991.

\bibitem{dalalyan2008}
Arnak~S. Dalalyan, Anatoly Juditsky, and Vladimir Spokoiny.
\newblock A new algorithm for estimating the effective dimension-reduction
  subspace.
\newblock {\em J. Mach. Learn. Res.}, 9:1648--1678, 2008.

\bibitem{patilea2003}
Michel Delecroix, Marian Hristache, and Valentin Patilea.
\newblock On semiparametric {$M$}-estimation in single-index regression.
\newblock {\em J. Statist. Plann. Inference}, 136(3):730--769, 2006.

\bibitem{devroye1980}
LP~Devroye and TJ~Wagner.
\newblock The strong uniform consistency of kernel density estimates.
\newblock {\em Multivariate analysis}, 5:59--77, 1980.

\bibitem{fan1996}
J.~Fan and I.~Gijbels.
\newblock {\em Local polynomial modelling and its applications}, volume~66 of
  {\em Monographs on Statistics and Applied Probability}.
\newblock Chapman \& Hall, London, 1996.

\bibitem{federer1969}
Herbert Federer.
\newblock {\em Geometric measure theory}.
\newblock Die Grundlehren der mathematischen Wissenschaften, Band 153.
  Springer-Verlag New York Inc., New York, 1969.

\bibitem{gamboa2007}
Fabrice Gamboa, Jean-Michel Loubes, and Elie Maza.
\newblock Semi-parametric estimation of shifts.
\newblock {\em Electron. J. Stat.}, 1:616--640, 2007.

\bibitem{gasser1979}
Theo Gasser and Hans-Georg M{\"u}ller.
\newblock {\em Kernel estimation of regression functions}.
\newblock Springer, 1979.

\bibitem{hall1980}
P.~Hall and C.~C. Heyde.
\newblock {\em Martingale limit theory and its application}.
\newblock Academic Press Inc. [Harcourt Brace Jovanovich Publishers], New York,
  1980.
\newblock Probability and Mathematical Statistics.

\bibitem{hardle1990}
Wolfgang H{\"a}rdle.
\newblock {\em Applied nonparametric regression}, volume~19 of {\em Econometric
  Society Monographs}.
\newblock Cambridge University Press, Cambridge, 1990.

\bibitem{hardle1989}
Wolfgang H{\"a}rdle and Thomas~M. Stoker.
\newblock Investigating smooth multiple regression by the method of average
  derivatives.
\newblock {\em J. Amer. Statist. Assoc.}, 84(408):986--995, 1989.

\bibitem{juditsky2001}
Marian Hristache, Anatoli Juditsky, and Vladimir Spokoiny.
\newblock Direct estimation of the index coefficient in a single-index model.
\newblock {\em Ann. Statist.}, 29(3):595--623, 2001.

\bibitem{ichimura1993}
Hidehiko Ichimura.
\newblock Semiparametric least squares (sls) and weighted sls estimation of
  single-index models.
\newblock {\em Journal of Econometrics}, 58(1):71--120, 1993.

\bibitem{jones1993}
MC~Jones.
\newblock Simple boundary correction for kernel density estimation.
\newblock {\em Statistics and Computing}, 3(3):135--146, 1993.

\bibitem{li1991}
Ker-Chau Li.
\newblock Sliced inverse regression for dimension reduction.
\newblock {\em J. Amer. Statist. Assoc.}, 86(414):316--342, 1991.

\bibitem{portier2013}
Fran{\c{c}}ois Portier and Bernard Delyon.
\newblock Optimal transformation: a new approach for covering the central
  subspace.
\newblock {\em J. Multivariate Anal.}, 115:84--107, 2013.

\bibitem{portier2014}
Fran{\c{c}}ois Portier and Bernard Delyon.
\newblock Bootstrap testing of the rank of a matrix via least squared
  constrained estimation.
\newblock {\em J. Amer. Statist. Assoc.}, 2014.

\bibitem{powell1989}
James~L. Powell, James~H. Stock, and Thomas~M. Stoker.
\newblock Semiparametric estimation of index coefficients.
\newblock {\em Econometrica}, 57(6):1403--1430, 1989.

\bibitem{robinson1988}
Peter~M Robinson.
\newblock Root-n-consistent semiparametric regression.
\newblock {\em Econometrica: Journal of the Econometric Society}, pages
  931--954, 1988.

\bibitem{rutkowski1982}
L~Rutkowski.
\newblock On-line identification of time-varying systems by nonparametric
  techniques.
\newblock {\em Automatic Control, IEEE Transactions on}, 27(1):228--230, 1982.

\bibitem{stone1975}
Charles~J Stone.
\newblock Adaptive maximum likelihood estimators of a location parameter.
\newblock {\em The Annals of Statistics}, pages 267--284, 1975.

\bibitem{stone1980}
Charles~J Stone.
\newblock Optimal rates of convergence for nonparametric estimators.
\newblock {\em The annals of Statistics}, pages 1348--1360, 1980.

\bibitem{tsybakov2009}
Alexandre~B. Tsybakov.
\newblock {\em Introduction to nonparametric estimation}.
\newblock Springer Series in Statistics. Springer, 2009.

\bibitem{vial2003}
C{\'e}line Vial.
\newblock {\em Deux contributions {\`a} l'{\'e}tude semi-param{\'e}trique d'un
  mod{\`e}le de r{\'e}gression}.
\newblock PhD thesis, University of Rennes 1, 2003.

\bibitem{xia2007}
Yingcun Xia.
\newblock A constructive approach to the estimation of dimension reduction
  directions.
\newblock {\em Ann. Statist.}, 35(6):2654--2690, 2007.

\bibitem{zeng2010}
Peng Zeng and Yu~Zhu.
\newblock An integral transform method for estimating the central mean and
  central subspaces.
\newblock {\em J. Multivariate Anal.}, 101(1):271--290, 2010.

\end{thebibliography}

\end{document}